\newcommand{\imin}[1][\lambda]{m{\ifx&#1&\else(#1)\fi}}
\newcommand{\imax}[1][\lambda]{M{\ifx&#1&\else(#1)\fi}}
\newcommand{\iset}[1][\lambda]{J{\ifx&#1&\else(#1)\fi}}
\renewcommand{\lambdahat}{\widehat{\smash{\lambda}\vphantom{2}}}
\renewcommand{\lambdatilde}{\widetilde{\lambda}}
\DeclareMathOperator{\CSO}{CSO}
\newcommand{\genCSO}{Z}
\newcommand{\samples}{343}
\providecommand{\citet}[2][]{\citeauthor{#2} \cite[#1]{#2}}
\begin{document}
\title{Rank-one convexity vs.\ ellipticity for isotropic functions}
\date{\today}
\knownauthors[martin]{martin,voss,ghiba,neff}
\maketitle
\begin{abstract}
\noindent
	It is well known that a twice-differentiable real-valued function $W\col\GLpn\to\R$ on the group $\GLpn$ of invertible \nnmatrices\ with positive determinant is \emph{rank-one convex} if and only if it is \emph{Legendre-Hadamard elliptic}. Many energy functions arising from interesting applications in isotropic nonlinear elasticity, however, are not necessarily twice differentiable everywhere on $\GLpn$, especially at points with non-simple singular values.
	
	Here, we show that if an isotropic function $W$ on $\GLpn$ is twice differentiable at each $F\in\GLpn$ with simple singular values and Legendre-Hadamard elliptic at each such $F$, then $W$ is already rank-one convex under strongly reduced regularity assumptions. In particular, this generalization makes (local) ellipticity criteria accessible as criteria for (global) rank-one convexity to a wider class of elastic energy potentials expressed in terms of \emph{ordered} singular values. Our results are also directly applicable to so-called \emph{conformally invariant} energy functions. We also discuss a classical ellipticity criterion for the planar case by Knowles and Sternberg which has often been used in the literature as a criterion for global rank-one convexity and show that for this purpose, it is still applicable under weakened regularity assumptions.
\end{abstract}
{\textbf{Key words:} nonlinear elasticity, rank-one convexity, ellipticity, Legendre-Hadamard condition, isotropy, planar elasticity}
\\[.65em]
\noindent\textbf{AMS 2010 subject classification:
	74B20, %
	26B25  %
}\\
{%
	\tableofcontents%
}

\section{Introduction}
\label{section:introduction}
In the context of nonlinear elasticity theory, we consider the deformation $\varphi\col\Omega\subset\R^n\to\R^n$ of an elastic body $\Omega\subset\R^n$. For so-called hyperelastic material models, the elastic behaviour of the body is determined by an energy potential function $W\col\GLpn\to\R,\, F\mapsto W(F)$ depending on the deformation gradient $F=\grad\varphi$. Since elastic deformations are assumed not to be self-intersecting, the natural domain of such an energy function is given by the group $\GLpn$ of invertible \nnmatrices\ with positive determinant.

In this paper, we will consider the following two specific properties of energy functions. %

\begin{definition}
\label{definition:rankOneConvexity}
	A function $W\col\GLpn\to\R$ is called \emph{rank-one convex} if for all $F\in\GLpn$ and all $H\in\Rnn$ with $\rank(H)=1$ and $F+H\in\GLpn$, the mapping $t\mapsto W(F+tH)$ is convex on the interval $[0,1]$ or, equivalently,
	\begin{equation}\label{eq:rankOneConvexityDefinition}
		\begin{aligned}
			W((1-t)F_1+tF_2) \leq (1-t)\.W(F_1) + t\.W(F_2)
			\quad&\text{ for all }\; t\in[0,1]\,,\; F_1,F_2\in\GLpn
			\\&\text{ with } \rank(F_2-F_1)=1\,.
		\end{aligned}
	\end{equation}
\end{definition}
\begin{definition}
\label{definition:ellipticity}
	A function $W\col\GLpn\to\R$ is called \emph{Legendre-Hadamard elliptic}, \emph{LH-elliptic} or simply \emph{elliptic} at $F\in\GLpn$ if $W$ is two-times differentiable and
	\[
		D^2W[F].(\xi\otimes\eta,\xi\otimes\eta)\geq 0\qquad\text{for all }\;\xi,\eta\in\R^n\,.
	\]
\end{definition}

It is well known and easy to show (cf.\ Proposition \ref{proposition:ellipticityRankOneEquivalence}) that rank-one convexity and ellipticity of a sufficiently regular energy function are, in fact, equivalent \cite{Dacorogna08}. It is, however, interesting to note that within the context of nonlinear elasticity, the two properties mainly arise in rather different fields: On the one hand, rank-one convexity is deeply connected to the (purely \emph{mathematical}) investigation of the \emph{existence of minimizers} for energy functionals of the form
\[
	I\col\mathcal{A}\to\R\,,\qquad I(\varphi) = \int_\Omega W(\grad\varphi(x))\,\dx
\]
within an appropriate set $\mathcal{A}$ of admissible functions. This connection is mostly due to the relation between rank-one convexity and other generalized convexity properties; most importantly, rank-one convexity of $W\col\GLpn\to\R$ is a necessary condition for $W$ to be quasiconvex \cite{morrey1952quasi} or polyconvex \cite{ball1976convexity,ball1977constitutive,agn_schroder2010poly}.%

Legendre-Hadamard ellipticity, on the other hand, is usually considered a \emph{constitutive requirement} for an elastic energy potential, i.e.\ a property of a material model motivated by \emph{mechanical} considerations. In particular, ellipticity plays an important role for material stability \cite{knowles1976failure,knowles1978failure,silhavy1997mechanics,grabovsky2018rank} and ensures finite wave propagation speed in elastic bodies \cite{eremeyev2007constitutive,zubov2011,sawyersRivlin78}. LH-ellipticity is also directly connected to other constitutive properties of materials; for example (cf.\ Proposition \ref{proposition:knowlesSternbergOriginal}), every elliptic isotropic energy function $W$ also satisfies the \emph{tension-extension inequalities} \cite{truesdell65}
as well as the \emph{Baker-Ericksen inequalities} \cite{bakerEri54}, cf.\ Section \ref{section:BEinequalities}. The notion of LH-ellipticity is applicable to more general models of solid deformations as well \cite{agn_neff2014loss,agn_shirani2020legendre}.

\section{Rank-one convexity, ellipticity and regularity of isotropic functions}

The well-known equivalence of ellipticity and rank-one convexity for two-times differentiable functions is easy to establish.
\begin{proposition}
\label{proposition:ellipticityRankOneEquivalence}
	Let $W\col\GLpn\to\R$ be two-times differentiable. Then $W$ is rank-one convex if and only if $W$ is elliptic at every $F\in\GLpn$.
\end{proposition}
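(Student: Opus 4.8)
The plan is to prove both implications directly; the only point that requires any care is that a segment joining two rank-one-connected matrices of $\GLpn$ stays inside $\GLpn$, so I would isolate that first. Namely, for any $F\in\GLpn$ and any $H\in\Rnn$ with $\rank(H)\leq 1$, the matrix $F^{-1}H$ has rank at most one, so $\det(I+tF^{-1}H)=1+t\,\operatorname{tr}(F^{-1}H)$ and hence
\[
	\det(F+tH) \;=\; \det(F)\,\bigl(1+t\,\operatorname{tr}(F^{-1}H)\bigr)
\]
is an \emph{affine} function of $t$. In particular, if $F,F+H\in\GLpn$, then $\det(F+tH)>0$ for all $t\in[0,1]$, so the whole segment $\{F+tH : t\in[0,1]\}$ lies in $\GLpn$; this is what makes $t\mapsto W(F+tH)$ in Definition \ref{definition:rankOneConvexity} well-defined in the first place, and it is the key ingredient for the "elliptic $\Rightarrow$ rank-one convex" direction.

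For "rank-one convex $\Rightarrow$ elliptic", I would fix $F\in\GLpn$ and $\xi,\eta\in\R^n$ and set $H:=\xi\otimes\eta$; if $H=0$ the ellipticity inequality is trivial, so assume $\rank(H)=1$. Since $\GLpn$ is open, there is $\delta>0$ with $F+sH\in\GLpn$ for all $|s|\leq\delta$. Applying rank-one convexity with base point $F-\delta H$ and rank-one increment $2\delta H$ shows that $s\mapsto W(F+sH)$ is convex on $[-\delta,\delta]$. As $W$ is twice differentiable, this one-variable function is $C^2$ near $s=0$, and by the chain rule its second derivative there equals $D^2W[F].(H,H)=D^2W[F].(\xi\otimes\eta,\xi\otimes\eta)$; a convex $C^2$ function has nonnegative second derivative, so $W$ is elliptic at $F$.

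For "elliptic $\Rightarrow$ rank-one convex", I would take $F_1,F_2\in\GLpn$ with $\rank(F_2-F_1)=1$, write $H:=F_2-F_1=\xi\otimes\eta$ and $F(t):=F_1+tH$. By the preliminary observation, $F(t)\in\GLpn$ for every $t\in[0,1]$, so $g(t):=W(F(t))$ is well-defined and twice differentiable on $[0,1]$ with $g''(t)=D^2W[F(t)].(H,H)=D^2W[F(t)].(\xi\otimes\eta,\xi\otimes\eta)\geq 0$ by ellipticity at $F(t)$. A twice-differentiable function on an interval whose second derivative is everywhere nonnegative is convex, hence $g$ is convex on $[0,1]$, which is exactly inequality \eqref{eq:rankOneConvexityDefinition}; thus $W$ is rank-one convex.

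I do not expect a genuine obstacle here: the substance is entirely in the affine-determinant remark of the first paragraph (without which the reverse implication would not even be well-posed), together with the elementary facts that $\frac{d^2}{dt^2}W(F+tH)=D^2W[F+tH].(H,H)$ and that sign-definiteness of the one-dimensional second derivative characterizes convexity along the segment. The only places to be slightly attentive are the localization step in the forward direction (using openness of $\GLpn$ to get convexity on a two-sided neighborhood of $0$) and the harmless degenerate case $\xi\otimes\eta=0$.
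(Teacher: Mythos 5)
Your proof is correct and follows essentially the same route as the paper: nonnegativity of $\frac{d^2}{dt^2}W(F+tH)=D^2W[F+tH].(\xi\otimes\eta,\xi\otimes\eta)$ along rank-one segments for one direction, and localization via openness of $\GLpn$ plus convexity of $t\mapsto W(F+tH)$ on a small two-sided interval for the other. The only difference is that you make explicit the affine-determinant argument showing the segment stays in $\GLpn$ (rank-one convexity of the set $\GLpn$), which the paper takes for granted here and only notes later after Corollary \ref{corollary:singularValuesAnalyticFunctions}.
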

\begin{proof}
	Let $W$ be elliptic, and let $F\in\GLpn$ and $H\in\Rnn$ with $\rank(H)=1$. Then there exist $\xi,\eta\in\R^n$ such that $H=\xi\otimes\eta$, thus
	\begin{equation}\label{eq:positiveSecondDerivativeRankOneDirection}
		\ddtsq\; W(F+tH) = D^2W[F+tH].(H,H) = D^2W[F+tH].(\xi\otimes\eta,\xi\otimes\eta) \geq 0\,.
	\end{equation}
	Thus the mapping $t\mapsto W(F+tH)$ is two-times differentiable on $[0,1]$ with nonnegative second derivative, which implies that the mapping is convex.
	
	Similarly, if $W$ is rank-one convex, then for given $\xi,\eta\in\R^n\setminus\{0\}$, we find $\rank(H)=1$ for $H\colonequals\xi\otimes\eta$. Since $\GLpn\subset\Rnn$ is open, we can choose $\eps>0$ sufficiently small such that $F-\frac\eps2H\in\GLpn$ and $F+\frac\eps2H\in\GLpn$. Then the rank-one convexity of $W$ implies that the mapping $t\mapsto W(F-\frac\eps2H+t\.\eps H)$ is convex on $[0,1]$ and thus
	\[
		0 \leq \ddtsq\; W(F-\tfrac\eps2 H+t\eps H)\big|_{t=\frac12} = D^2W[F].(\eps\.H,\eps\.H) = \eps^2\,D^2W[F].(\xi\otimes\eta,\xi\otimes\eta)\,. \qedhere
	\]
\end{proof}

Since ellipticity is a purely local property \cite{agn_ghiba2015ellipticity} which can in many cases be checked by direct computational means, the equivalence stated in Proposition \ref{proposition:ellipticityRankOneEquivalence} is often used to decide whether a given energy function is rank-one convex. For the planar isotropic case, in particular, a number of criteria for ellipticity in terms of the singular values of the deformation gradient $F$ are available, cf.\ Section \ref{section:knowlesSternberg}.

However, in many interesting applications in isotropic nonlinear elasticity, the requirement of $C^2$-regularity is far too strict. In particular, isotropic energy functions are often most naturally expressed in terms of the \emph{ordered singular values} of the deformation gradient, i.e.\ in the form
\begin{equation}\label{eq:energyInTermsOfOrderedSingularValues}
	W\col\GLpn\to\R\,,\quad W(F) = \ghat(\lambdahat_1,\dotsc,\lambdahat_n)
\end{equation}
for all $F\in\GLpn$ with singular values $\lambdahat_1\geq\ldots\geq\lambdahat_n$, where $\ghat\col\Oset\to\R$ is a real-valued function defined on the set
\[
	\Oset \colonequals \{(x_1,\dotsc,x_n)\in\Rp^n \setvert x_1\geq\ldots\geq x_n\}\,.
\]
Although, in applications, such a function is generally sufficiently regular on the interior (cf.\ Fig.~\ref{figure:orderedVectorSet})
\[
	\intOset = \{(x_1,\dotsc,x_n)\in\Rp^n \setvert x_1>\ldots> x_n\}
\]
and usually at least differentiable up to the boundary of $\Oset$ relative to $\Rp^n$, the function $W$ defined by \eqref{eq:energyInTermsOfOrderedSingularValues} is not necessarily differentiable (even once) in that case. This loss of regularity occurs at the points $F\in\GLpn$ where the singular values of $F$ are non-simple, corresponding to the boundary points of $\Oset$.

On the other hand, for given $\ghat\col\Oset\to\R$, let $g\col\Rp^n\to\R$ denote the uniquely defined symmetric (that is invariant under permutations of the arguments) function such that $g(x_1,\dotsc,x_n)=\ghat(x_1,\dotsc,x_n)$ for all ordered vectors $(x_1,\dotsc,x_n)\in M$. Then
\begin{equation}\label{eq:energyInTermsOfUnorderedSingularValues}
	W(F) = g(\lambda_1,\dotsc,\lambda_n)
\end{equation}
for all $F\in\GLpn$ with (not necessarily ordered) singular values $\lambda_1,\dotsc,\lambda_n$, and the regularity of $g$ directly corresponds to that of the energy $W$; more specifically, if $g$ is a function of class $C^k$, then so is $W$ and vice versa (cf.\ Corollary \ref{corollary:singularValuesAnalyticFunctions} as well as \cite[Theorem 6.4]{ball1984differentiability} and \cite{vsilhavy2000differentiability}).
It is easy to see that if $\ghat$ is $k$--times (continuously) differentiable at $(\lambda_1,\dotsc,\lambda_n)\in\intOset$, then so is $g$ at any permutation of $(\lambda_1,\dotsc,\lambda_n)$ and thus $W$ at any $F\in\GLpn$ with singular values $\lambda_1,\dotsc,\lambda_n$. The regularity of $g$ at any ordered $(\lambda_1,\dotsc,\lambda_n)$ with $\lambda_i=\lambda_{i+1}$ for some $i\in\{1,\dotsc,n\}$, however, requires additional conditions on the function $\ghat$ which are generally not satisfied.\footnote{%
	In the context of criteria for classical convexity (cf.\ \cite{hill1970}) of a real-valued function $W$ on $\GLpn$, the global differentiability of $W$ was called \enquote{[\ldots] surprisingly tedious to verify} by Ball \cite[p.~363]{ball1976convexity}.%
}

This loss of regularity, of course, means that LH-ellipticity is generally not well defined at any $F\in\GLpn$ with non-simple singular values. Therefore, it is difficult to establish global rank-one convexity of such an energy function by considering its pointwise ellipticity.

\begin{example}
\label{example:operatorNormEnergy}
	As a simple example, consider the function $W\col\GLp(n)\to\R$ with
	\begin{equation}\label{eq:exampleOperatorNormEnergy}
		W(F) \colonequals \opnorm{F} = \max\{\lambda_1,\dotsc,\lambda_n\} \equalscolon g(\lambda_1,\dotsc,\lambda_n)
	\end{equation}
	for all $F\in\GLpn$ with singular values $\lambda_1,\dotsc,\lambda_n$, where $\opnorm{\,.\,}$ denotes the operator norm. Then $W$ is a convex function and therefore rank-one convex. However, $W$ is not twice differentiable (not even in rank-one directions) on $\GLpn$ since, for example,
	\[
		W(c\.\id + t\.\diag(1,0,\dotsc,0)) = \max\{c+t,c,\dotsc,c\} =
		\begin{cases}
			c &: t\leq0\\
			c+t &: t>0
		\end{cases}
	\]
	for any $c>0$. Therefore, $W$ is not elliptic everywhere in the sense of Definition \ref{definition:ellipticity}, hence it would be impossible to establish the rank-one convexity of $W$ by any criteria which rely on pointwise ellipticity alone. Note that $W$ is indeed twice differentiable and LH-elliptic at every $F\in\GLpn$ with only simple singular values and that the representation
	\[
		\ghat\col\Oset\to\R\,,\quad \ghat(\lambdahat_1,\dotsc,\lambdahat_n) = \lambdahat_1
	\]
	in terms of the ordered singular values $\lambdahat_1\geq\dotsc\geq\lambdahat_n$ of $F$ can be extended to an analytic function $\gtilde\col\Rp^n\to\R$ by letting $\gtilde(\lambda_1,\dotsc,\lambda_n)=\lambda_1$, but that this extension is not equal to the representation $g$ of $W$ given in \eqref{eq:exampleOperatorNormEnergy}.
\end{example}

Note carefully that although the set of matrices with non-simple singular values is a nowhere dense set of measure zero, it is not at all obvious whether ellipticity on its complement is sufficient for rank-one convexity without any additional assumptions; even in the one-dimensional case, where rank-one convexity is equivalent to classical convexity and ellipticity corresponds to a non-negative second derivative, a function $W\col\GLp(1)\cong\Rp\to\R$ might be non-convex even if its restriction to both $(0,x_0]$ and $[x_0,\infty)$ is smooth for some $x_0\in\Rp$ with $W''(x)>0$ for all $x\neq x_0$. A simple, highly symmetric  example is shown in Fig.\ref{figure:nonRegularExampleGraph}.

\begin{figure}[h]
	\begin{center}
		\begin{minipage}[t]{.609\textwidth}
			\begin{center}
				\begin{tikzpicture}
					\begin{axis}[
						axis x line=bottom,axis y line=left,
						x label style={%
							at={(ticklabel* cs:.98)},
							above right
						},
						xlabel={$x$},
						xmin=-.833, xmax=.833,
						ymin=-0.0343, ymax=.343,
						xtick={0},
						ytick=\empty,
						xticklabels={$x_0$},
						yticklabels=\empty,
						hide obscured x ticks=false,
				        width=.9898\linewidth,
				        height=.49\linewidth
					]
						\addplot[domain=-1.015:0,samples=\samples,thick,black] {0.49*(x+.4998)^2}
							node[pos=.833, below right] {$W$};
						\addplot[domain=0:1.015,samples=\samples,thick,black] {0.49*(x-.4998)^2};
						\addplot[domain=0:.49,samples=\samples,thick,blue,dashed] {0.49*(x+.4998)^2}
							node[pos=.343, below right] {$W_-$};
						\addplot[domain=0:-.49,samples=\samples,thick,red,dashed] {0.49*(x-.4998)^2}
							node[pos=.343, below left] {$W_+$};
					\end{axis}
				\end{tikzpicture}
			\par\end{center}
			\vspace*{-1.47em}
			\caption{\label{figure:nonRegularExampleGraph}%
				Example of a non-convex, continuous function $W$ with $W''(x)>0$ for all $x\neq x_0$ such that $W\big|_{(0,x_0]}$ and $W\big|_{[x_0,\infty})$ are smooth, i.e.\ can be extended to smooth functions $W_-$ and $W_+$ on $\Rp$, respectively.%
			}
		\end{minipage}
		\hfill
		\begin{minipage}[t]{.343\textwidth}
			\begin{center}
				\begin{tikzpicture}
					\begin{axis}[
						axis x line=middle,axis y line=middle,
        				x label style={at={(current axis.right of origin)},anchor=north, below},
						xlabel={\hspace*{-.637em}$\lambda_1$},
						ylabel={$\lambda_2$},
						xmin=-.049, xmax=1.05,
						ymin=-.049, ymax=1.05,
						xtick=\empty,
						ytick=\empty,
				        width=.9898\textwidth,
				        height=.9898\textwidth
					]
						\addplot[thick,opacity=0,fill=blue, fill opacity=0.098]coordinates {(1.001, 1.001) (1.001, 0) (0, 0)}
							node[pos=0.5,color=black,opacity=1,above left,xshift=-0.8cm,yshift=0.8cm] {$\intOset[2]$};
						\addplot[domain=0:1.001,samples=2,thick,red] {x}
							node[pos=.637, above left] {$\partial\Oset[2]$};
					\end{axis}
				\end{tikzpicture}	
			\par\end{center}
			\vspace*{-1.47em}
			\caption{\label{figure:orderedVectorSet}%
				Visualization of the set $\Oset[2]$; in the two-dimensional case, the boundary relative to $\Rp^2$ is given by $\partial\Oset[2]=\{(\lambda,\lambda)\setvert\lambda>0\}$.%
			}
		\end{minipage}
		\hfill
	\end{center}
\end{figure}

However, in the scalar case, the following well-known condition based on the one-sided derivatives of a function is sufficient to ensure that convexity still holds if regularity is lost only at discrete points.

\begin{lemma}[{\cite[p.~35]{hiriart2013convex}}]
\label{lemma:convexityOneSidedDerivatives}
	Let $W\col I\to\R$ be a continuous real-valued function on an interval $I\subset\R$ such that for finitely many $t_1,\dotsc,t_m\in I$,
	\begin{itemize}
		\item[i)]
			$W$ is two-times differentiable on $I\setminus\{t_1,\dotsc,t_m\}$,
		\item[ii)]
			$W''(t)\geq0$ for all $t\in I\setminus\{t_1,\dotsc,t_m\}$,
		\item[iii)]
			for each $i\in\{1,\dotsc,m\}$, the \emph{left and right one-sided derivatives}
			\begin{equation}\label{eq:oneSidedDerivativeDefinition}
				\partial^- W(t_i) \colonequals \lim_{h\upto0} \frac{W(t_i+h)-W(t_i)}{h}
				\qquad\text{and}\qquad
				\partial^+ W(t_i) \colonequals \lim_{h\downto0} \frac{W(t_i+h)-W(t_i)}{h}
			\end{equation}
			exist and satisfy $\partial^- W(t_i)\leq \partial^+ W(t_i)$.
	\end{itemize}
	Then $W$ is convex on $I$. Furthermore, if $W$ is a convex function on $I$, then $\partial^- W(t)$ and $\partial^+ W(t)$ are well-defined and $\partial^- W(t_i)\leq \partial^+ W(t_i)$ for every $t\in I$.
	\directqed
\end{lemma}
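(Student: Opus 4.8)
The second assertion is the classical fact that a convex function on an interval has well-defined one-sided derivatives with $\partial^-W(t)\le\partial^+W(t)$ at each point, which follows at once from the monotonicity in $h$ of the difference quotients $\frac{W(t+h)-W(t)}{h}$; so I concentrate on the forward implication. The plan is to manufacture from $W$ a single \emph{nondecreasing} function $g\col I\to\R$ that serves as a derivative of $W$ (in a suitably relaxed sense) on all of $I$, and then to read off convexity from the monotonicity of $g$ alone.

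\emph{Step 1.} Relabel the exceptional points so that $t_1<\dots<t_m$; they divide $I$ into closed subintervals on whose interiors $W$ is twice differentiable with $W''\ge0$, so $W'$ is nondecreasing on each of these subintervals by the usual consequence of the mean value theorem. \emph{Step 2.} Hence at each $t_i$ the one-sided limits $\lim_{t\uparrow t_i}W'(t)$ and $\lim_{t\downarrow t_i}W'(t)$ exist in $[-\infty,+\infty]$; expressing the difference quotients of $W$ at $t_i$ through values of $W'$ via the mean value theorem on intervals shrinking to $t_i$ identifies these limits with $\partial^-W(t_i)$ and $\partial^+W(t_i)$ respectively, and in particular, by hypothesis~(iii), they are finite. \emph{Step 3.} Define $g\col I\to\R$ by $g\colonequals W'$ on $I\setminus\{t_1,\dots,t_m\}$ and $g(t_i)\colonequals\partial^+W(t_i)$. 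For $s$ in the subinterval ending at $t_i$ one gets $g(s)=W'(s)\le\lim_{u\uparrow t_i}W'(u)=\partial^-W(t_i)\le\partial^+W(t_i)=g(t_i)$, and for $s$ in the subinterval starting at $t_i$ one gets $g(t_i)=\partial^+W(t_i)=\lim_{u\downarrow t_i}W'(u)\le W'(s)=g(s)$; together with the monotonicity of $W'$ inside each subinterval this shows $g$ is nondecreasing across each $t_i$, hence nondecreasing on all of $I$.

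\emph{Step 4.} The remaining task is to deduce convexity of $W$ from the facts that $W$ is continuous on $I$, is differentiable with derivative $g$ off the finite set $\{t_1,\dots,t_m\}$, and that $g$ is nondecreasing. Applying, to $W$ minus a suitable affine function, the elementary observation that a continuous function with nonnegative derivative off a finite set is nondecreasing, one obtains for any $a<b$ in $I$ the mean value inequalities bounding $\frac{W(b)-W(a)}{b-a}$ between the infimum and the supremum of $g$ on $(a,b)$. Combined with the monotonicity of $g$, these give, for each fixed $z\in I$ and every $t\in I$,
\[
	W(t)\;\ge\;W(z)+g(z)\,(t-z)\,,
\]
since for $t>z$ the slope $\frac{W(t)-W(z)}{t-z}$ is at least $\inf_{(z,t)}g\ge g(z)$ and for $t<z$ it is at most $\sup_{(t,z)}g\le g(z)$. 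Thus $g(z)$ is a subgradient of $W$ at every $z\in I$; since a function on an interval that admits a subgradient at each point is convex (given $x,y\in I$, $s\in[0,1]$, $z\colonequals(1-s)x+sy$, apply the subgradient inequality at $z$ to $t=x$ and to $t=y$ and take the convex combination), $W$ is convex on $I$.

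The only genuine difficulty, mild as it is, lies in Steps 2 and 3: identifying the one-sided derivatives $\partial^\pm W(t_i)$, which are defined through difference quotients of $W$, with the one-sided limits of the pointwise derivative $W'$, and ruling out an a priori infinite slope at an exceptional point. Everything else is routine convexity calculus.
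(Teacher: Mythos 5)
Your argument is correct. Note that the paper does not prove this lemma at all: it is quoted from Hiriart-Urruty and Lemar\'echal with a direct reference, so there is no internal proof to compare against, and a self-contained argument like yours is exactly what a reader would have to supply. Your route --- build the nondecreasing selection $g$ equal to $W'$ off the exceptional points and to $\partial^+W(t_i)$ at them, verify monotonicity across each $t_i$ using the identification $\partial^-W(t_i)=\lim_{u\uparrow t_i}W'(u)$ and $\partial^+W(t_i)=\lim_{u\downarrow t_i}W'(u)$ together with hypothesis (iii), then show via the mean value inequalities (valid for a continuous function differentiable off a finite set) that $g(z)$ is a subgradient of $W$ at every $z\in I$, and conclude convexity from the subgradient characterization --- is sound, and the two delicate points you flag yourself (the identification of the one-sided derivatives of $W$ with the one-sided limits of $W'$, and their finiteness, which is where (iii) enters) are handled correctly: the mean value theorem on shrinking intervals plus the monotonicity of $W'$ on each subinterval gives both inequalities needed for the identification. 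Two cosmetic remarks: in Step 3 the passage from ``nondecreasing across each $t_i$'' to ``nondecreasing on all of $I$'' is a short chaining argument through consecutive exceptional points that could be stated explicitly, and if some $t_i$ happens to be an endpoint of $I$ only one one-sided derivative exists there, so $g(t_i)$ should then be defined by the available one; neither affects the substance, and in the paper's application the exceptional points lie in the open interval anyway.
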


In Sections \ref{section:planarCase} and \ref{section:generalCase}, we
apply Lemma \ref{lemma:convexityOneSidedDerivatives} to deduce the rank-one convexity of suitable functions on $\GLpn$ from their ellipticity at each $F\in\GLpn$ with only simple singular values. Under appropriate assumptions, it turns out that the additional requirement posed on the one-sided derivatives is already implied by the so-called Baker-Ericksen inequalities.

\subsection{The Baker-Ericksen inequalities}
\label{section:BEinequalities}

It is well known \cite{marsden1994foundations} that if an objective-isotropic energy $W\col\GLpn\to\R$ is elliptic at $F\in\GLpn$ with singular values $\lambda_1,\dotsc,\lambda_n$, then the \emph{Baker-Ericksen inequalities}
\begin{alignat}{3}
\label{eq:BEinequalities}
	\frac{\lambda_i\.\pdd{g}{\lambda_i}-\lambda_j\.\pdd{g}{\lambda_j}}{\lambda_i-\lambda_j}
	\;&\geq0\;
	&\qquad\tforall i,j\in\{1,\dotsc,n\}
	&\quad\twith \lambda_i\neq\lambda_j
\intertext{%
	hold at $(\lambda_1,\dotsc,\lambda_n)$, where $g\col\Rp^n\to\R$ denotes the unordered singular value representation of $W$, cf.\ \eqref{eq:energyInTermsOfUnorderedSingularValues}. In terms of the ordered singular value representation $\ghat\col\Oset\to\R$ of $W$ given in \eqref{eq:energyInTermsOfOrderedSingularValues}, the Baker-Ericksen inequalities \eqref{eq:BEinequalities} can equivalently be expressed as
}%
\label{eq:BEinequalitiesOrdered}
	\lambdahat_i\.\pdd{\,\ghat}{\lambdahat_i}
	\;&\geq\; \lambdahat_j\.\pdd{\,\ghat}{\lambdahat_j}
	&\qquad\tforall i,j\in\{1,\dotsc,n\}
	&\quad\twith i< j
	\,.
\intertext{%
	Assume that $\ghat$ is continuously differentiable on $\Oset$ up to the boundary%
	\footnotemark
	$\partial\Oset$ relative to $\Rp^n$, i.e.\ that $\ghat$ can be extended to (or, equivalently, is a restriction of) a continuously differentiable function on $\Rp^n$, and that $\ghat$ satisfies \eqref{eq:BEinequalitiesOrdered} on $\intOset$. Then due to the assumed regularity of $\ghat$, the inequalities \eqref{eq:BEinequalitiesOrdered} hold on $\partial\Oset$ as well. In particular, for $(\lambdahat_1,\dotsc,\lambdahat_n)\in\partial\Oset$ with $\lambdahat_i=\lambdahat_j=\lambda$ for all $i,j\in\{\imin[],\dotsc,\imax[]\}\subset\{1,\dotsc,n\}$, we find
}%
\label{eq:BEinequalitiesOrderedNonSimpleSingularValues}
	\lambda\.\pdd{\,\ghat}{\lambdahat_i}
	\;&\geq\; \lambda\.\pdd{\,\ghat}{\lambdahat_j}
	&\qquad\tforall i,j\in\{\imin[],\dotsc,\imax[]\}
	&\quad\twith i< j
	\,.
\end{alignat}%
\footnotetext{%
	Recall that $\partial\Oset$ consists of the ordered vectors in $\Rp^n$ for which not all of the entries are distinct.%
}%
In this case,
\begin{equation}\label{eq:partialDerivativeOrderingImpliedByBEinequalities}
	\pdd{\,\ghat}{\lambdahat_{\imin[]}}
	\;\geq\; \ldots
	\;\geq\; \pdd{\,\ghat}{\lambdahat_{\imax[]}}
	\,;
\end{equation}
in other words, the partial derivatives %
of $\ghat$ (or, more precisely, of the extension of $\ghat$ to $\Rp^n$) with respect to components which are equal at $(\lambdahat_1,\dotsc,\lambdahat_n)\in\partial\Oset$ are in descending order. This property holds, in particular, for any $\ghat$ with $\ghat(\lambdahat(F))=W(F)$ if $W$ is elliptic at each $F\in\GLpn$ with simple singular values.
In the following, this observation will allow us to omit any additional conditions on $\ghat$ at the boundary of $\Oset$.

\section{Rank-one convexity and ellipticity in the planar isotropic case}
\label{section:planarCase}
For now, we will focus on the planar case $n=2$. Note that in this case, $F\in\GLp(2)$ has non-simple singular values $\lambda_1=\lambda_2=a$ if and only if the singular value decomposition of $F$ is of the form
\[
	F = Q_1\.\matr{\lambda&0\\0&\lambda}\.Q_2 = \lambda\.Q_1\.Q_2
\]
with $Q_1,Q_2\in\SO(2)$. In particular, $F$ has non-simple singular values if and only if $F$ is \emph{conformal}, i.e.\ if and only if there exist $\lambda\in\Rp$ and $Q\in\SO(2)$ such that $F=\lambda\cdot Q\in\CSO(2)$, where $\CSO(2)=\Rp\cdot\SO(2)$ is the conformal special orthogonal group. We will also denote the ordered singular values $\lambdahat_1(F)\geq\lambdahat_2(F)$, respectively, by $\lambdamax(F),\lambdamin(F)$ or simply by $\lambdamax,\lambdamin$.

As an important example, we first consider the class of so-called \emph{conformally invariant} energy functions, i.e.~ any $W\col\GLp(2)\to\R$ with $W(\genCSO F)=W(F\genCSO)=W(F)$ for all $\genCSO\in\CSO(2)$. These energy functions, which play an important role in the theory of conformal and quasiconformal mappings \cite{grotzsch1928einige,teichmuller1944verschiebungssatz,astala2008elliptic} as well as nonlinear elasticity \cite{iwaniec2009,agn_voss2019volIsLog,agn_hartmann2003polyconvexity}, can be expressed in terms of the so-called \emph{linear distortion function}, given by
\[
	K\col\GLp(2)\to\R\,,\qquad K(F) = \frac{\opnorm{F}^2}{\det F} = \frac{\lambdamax}{\lambdamin}\,,
\]
where $\opnorm{F}=\lambdamax$ denotes the operator norm of $F$. Note that the mapping $F\mapsto K(F)$ is itself rank-one convex (and even polyconvex) on $\GLp(2)$ due to the convexity of the mapping $(X,\delta)\mapsto\frac{\opnorm{X}^2}{\delta}$ on $\R^{2\times2}\times\Rp$.

\begin{example}
\label{example:conformallyInvariantEnergies}
	Let $W\col\GLp(2)\to\R$ be conformally invariant. Then there exists a function $\hhat\col[1,\infty)\to\R$ such that \cite{agn_martin2015rank}
	\begin{equation}\label{eq:conformallyInvariantEnergies}
		W\col\GLp(2)\to\R\,,\qquad W(F) = \hhat(K(F)) = \hhat \left( \frac{\lambdamax}{\lambdamin} \right) \equalscolon \ghat(\lambdamax,\lambdamin)\,.
	\end{equation}
	For the representation $\ghat\col\Oset[2]=\{(x_1,x_2)\in\Rp^2 \setvert x_1\geq x_2\}\to\R$ of $W$ in terms of ordered singular values, we find $\ghat\in C^k(\Oset)$ if and only if $\hhat\in C^k([1,\infty))$. Now, let
	\begin{align}
		h\col(0,\infty)\to\R\,,\qquad
		h(t) =
		\begin{cases}
			\hhat(t) &: t\geq1\\
			\hhat\left(\frac1t\right) &: t<1
		\end{cases}\,.
	\end{align}
	Then
	\[
		W(F) = \hhat \left( \frac{\max\{\lambda_1,\lambda_2\}}{\min\{\lambda_1,\lambda_2\}} \right) = h \left( \frac{\lambda_1}{\lambda_2} \right) \equalscolon g(\lambda_1,\lambda_2)%
	\]
	for all $F\in\GLp(2)$ with singular values $\lambda_1,\lambda_2$ in arbitrary order. In particular, the common regularity condition $g\in C^k(\Rp^2)$ for the representation $g$ of $W$ in terms of unordered singular values corresponds to the requirement $h\in C^k((0,\infty))$ which, in addition to $\hhat\in C^k([1,\infty))$, poses additional conditions on the derivatives of $\hhat$ at $1$, most notably $\hhat'(1)=0$ for $k\geq1$. Even in the simple case $\hhat(t)=t$ corresponding to the energy expression $W(F)=K(F)$, this condition is obviously not satisfied (cf.\ Fig.~\ref{figure:conformalNonRegularityGraph}). Therefore, criteria for rank-one convexity which are based purely on classical LH-ellipticity are not applicable to many practically relevant cases of conformally invariant energy functions.
\end{example}

It has recently been shown \cite{agn_martin2015rank,agn_martin2019envelope} (cf.\ \cite{agn_ghiba2015exponentiated}) that an energy of the form \eqref{eq:conformallyInvariantEnergies} is rank-one convex on $\GLp(2)$ if and only if it is polyconvex, which is the case if and only if $\hhat$ is monotone increasing and convex, regardless of any regularity. However, applications in nonlinear elasticity generally require these conformally invariant (or \emph{isochoric}) energies to be coupled with a \emph{volumetric} term in order to accurately model the behaviour of an elastic material. If, for example, an energy with an additive volumetric-isochoric split of the form
\[
	W\col\GLp(2)\to\R\,,\qquad W(F) = \hhat(K(F)) + f(\det F)
\]
with a function $f\col(0,\infty)\to\R$ is considered, then the equivalence of rank-one convexity and polyconvexity no longer holds \cite{agn_voss2019volIsLog}, and neither convexity condition implies or is implied by the simultaneous convexity and monotonicity of $\hhat$.

Again, in such cases, criteria for (pointwise) ellipticity might not be applicable in order to establish rank-one convexity due to a lack of regularity. However, the following result reduces the required regularity assumptions for such applications and shows that ellipticity of $W$ at each $F\in\GLp(2)$ with simple singular values is already sufficient for $W$ to be (globally) rank-one convex on $\GLp(2)$.

\begin{theorem}
\label{theorem:mainResultPlanar}
	Let $W\col\GLp(2)\to\R$ be an objective and isotropic function with
	\[
		W(F) = \ghat(\lambdamax(F),\lambdamin(F))
	\]
	for a real-valued function $\ghat\col\Oset[2]\to\R$ on the set $\Oset[2]=\{(x_1,x_2)\in\Rp^2\setvert x_1\geq x_2\}$, where $\lambdamax(F)\geq\lambdamin(F)$ are the singular values of $F$. If $\ghat\in C^2(\intOset[2])\cap C^1(\Oset[2])$ and $W$ is Legendre-Hadamard elliptic at each $F\in\GLp(2)$ with simple singular values $\lambdamax(F)\neq\lambdamin(F)$, then $W$ is rank-one convex on $\GLp(2)$.
\end{theorem}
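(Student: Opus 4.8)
The plan is to establish rank-one convexity straight from Definition~\ref{definition:rankOneConvexity}. Fix $F\in\GLp(2)$ and $H\in\R^{2\times2}$ with $\rank(H)=1$ and $F+H\in\GLp(2)$; since $H$ has rank one, $t\mapsto\det(F+tH)$ is affine, hence positive on all of $[0,1]$, so $F+tH\in\GLp(2)$ there and it suffices to prove that $\phi\col[0,1]\to\R$, $\phi(t)\colonequals W(F+tH)$, is convex. The first step is to locate the points of non-regularity of $\phi$: as recalled in the discussion preceding the theorem, $F+tH$ has non-simple singular values precisely when it is conformal, i.e.\ when it lies in the two-dimensional \emph{linear} subspace $\CSO(2)\cup\{0\}$ of $\R^{2\times2}$; if $F+t_1H$ and $F+t_2H$ both belonged to this subspace with $t_1\neq t_2$, then so would $(t_1-t_2)H$, forcing the nonzero rank-one matrix $H$ to be conformal, which is impossible. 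Hence there is \emph{at most one} $t_0\in[0,1]$ at which $F+t_0H$ has non-simple singular values.

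Away from $t_0$ everything is classical. Near any matrix with simple singular values the ordered singular values $\lambdamax,\lambdamin$ are smooth (real-analytic) functions of the entries (cf.\ Corollary~\ref{corollary:singularValuesAnalyticFunctions}), and there $(\lambdamax,\lambdamin)\in\intOset[2]$, so $W=\ghat(\lambdamax,\lambdamin)$ is of class $C^2$ near such a point; combined with the assumed ellipticity of $W$ at all such $F$, \eqref{eq:positiveSecondDerivativeRankOneDirection} gives $\phi''(t)=D^2W[F+tH].(H,H)\geq0$ for every $t\in[0,1]\setminus\{t_0\}$. Moreover $\phi$ is continuous on $[0,1]$ since $\ghat$ is continuous and the singular values depend continuously on $F$. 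So hypotheses i) and ii) of Lemma~\ref{lemma:convexityOneSidedDerivatives} hold with $m\leq1$, and only hypothesis iii), $\partial^-\phi(t_0)\leq\partial^+\phi(t_0)$, remains.

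For this last step, put $F_0\colonequals F+t_0H$ with double singular value $\lambda>0$, and study the polynomial curve $A(t)\colonequals(F+tH)^{T}(F+tH)$ of symmetric positive-definite matrices, which satisfies $A(t_0)=\lambda^2\,\id$ and $A'(t_0)=F_0^{T}H+H^{T}F_0$. I would use (analytic) eigenvalue perturbation theory at the degenerate point $t_0$: the two eigenvalue branches of $A$ are one-sided differentiable at $t_0$, with one-sided derivatives equal to the eigenvalues $\beta_1\geq\beta_2$ of $A'(t_0)$; since $\lambdamax^2$ follows the upper branch on one side of $t_0$ and the lower branch on the other, one obtains $\partial^+\lambdamax(t_0)=\tfrac{\beta_1}{2\lambda}$, $\partial^-\lambdamax(t_0)=\tfrac{\beta_2}{2\lambda}$ and the reversed identities for $\lambdamin$. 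Applying the chain rule for one-sided derivatives --- legitimate because $\ghat\in C^1(\Oset[2])$, i.e.\ is differentiable up to the boundary --- then yields
\[
	\partial^+\phi(t_0)-\partial^-\phi(t_0)=\frac{\beta_1-\beta_2}{2\lambda}\left(\pdd{\,\ghat}{\lambdahat_1}(\lambda,\lambda)-\pdd{\,\ghat}{\lambdahat_2}(\lambda,\lambda)\right).
\]
Here $\beta_1-\beta_2\geq0$ by construction, and the bracket is $\geq0$ by the Baker--Ericksen consequence \eqref{eq:partialDerivativeOrderingImpliedByBEinequalities}, which is available at the boundary point $(\lambda,\lambda)$ precisely because $W$ is elliptic at every $F$ with simple singular values and $\ghat$ extends to a $C^1$ function on $\Rp^2$. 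Thus $\partial^-\phi(t_0)\leq\partial^+\phi(t_0)$, Lemma~\ref{lemma:convexityOneSidedDerivatives} applies, $\phi$ is convex, and --- $F,H$ being arbitrary --- $W$ is rank-one convex on $\GLp(2)$.

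I expect the main obstacle to be making this one-sided derivative computation at the conformal point rigorous: one must handle the degenerate eigenvalue $\lambda^2$ of $A(t_0)$ with care --- in particular the sub-case $\beta_1=\beta_2$, where $A'(t_0)$ is a multiple of $\id$ and $\lambdamax,\lambdamin$ actually turn out to be differentiable at $t_0$ so that the displayed identity holds trivially with both sides zero --- correctly match the one-sided branch derivatives with the eigenvalues of $F_0^{T}H+H^{T}F_0$, and invoke the chain rule for one-sided derivatives of $\ghat$ at a \emph{boundary} point of $\Oset[2]$; this is exactly why the hypothesis $\ghat\in C^1(\Oset[2])$ (rather than merely $\ghat\in C^2(\intOset[2])$) is indispensable, both here and for the validity of \eqref{eq:partialDerivativeOrderingImpliedByBEinequalities} at $(\lambda,\lambda)$. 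The remainder is routine bookkeeping with the facts already collected in this section.
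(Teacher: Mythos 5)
Your proposal is correct and follows essentially the same route as the paper's proof: reduce to the scalar criterion of Lemma~\ref{lemma:convexityOneSidedDerivatives}, note that the conformal point $t_0$ along the rank-one line is unique, verify $\partial^-\phi(t_0)\leq\partial^+\phi(t_0)$ through the one-sided derivatives of $\lambdamax,\lambdamin$ (which swap at $t_0$), and close with the ordering $\pdd{\,\ghat}{\lambdahat_1}(\lambda,\lambda)\geq\pdd{\,\ghat}{\lambdahat_2}(\lambda,\lambda)$ coming from the Baker--Ericksen inequalities plus the $C^1$-continuity of $\ghat$ up to $\partial\Oset[2]$. The only (sound) deviations are in the auxiliary steps: you obtain uniqueness of $t_0$ directly from the fact that $\CSO(2)\cup\{0\}$ is a linear subspace containing no rank-one matrices instead of quoting Lemma~\ref{lemma:CSOnoRankOneConnections}, and you derive the branch-swap identities from first-order eigenvalue perturbation of $(F+tH)^T(F+tH)$ (with explicit values $\beta_i/(2\lambda)$, ultimately resting on the same analytic-perturbation fact as Corollary~\ref{corollary:singularValuesAnalyticFunctions}) rather than via the crossing analytic singular-value curves used in Lemma~\ref{lemma:singularValueDerivativePropertiesPlanar}.
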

\begin{remark}
	The requirement that $W$ is elliptic at each $F$ with simple singular values can equivalently be expressed as the ellipticity of $W$ on the set $\GLp(2)\setminus\CSO(2)$, where $\CSO(2)=\Rp\cdot\SO(2)$ denotes the special conformal orthogonal group.
\end{remark}
\begin{remark}
	The regularity assumption $\ghat\in C^1(\Oset[2])$ requires $\ghat$ to have any differentiable extension to $\Rp^2$, which is a significantly weaker requirement than $g\in C^1(\Rp^2)$ for the symmetrical extension $g\col\Rp^2\to\R$ with $g(y,x)=g(x,y)=\ghat(x,y)$ for all $x\geq y$, cf.\ Examples \ref{example:operatorNormEnergy} and \ref{example:conformallyInvariantEnergies}.
\end{remark}
\begin{remark}
	A criterion for rank-one convexity equivalent to Theorem \ref{theorem:mainResultPlanar} expressed purely in terms of the singular value representation $\ghat$ will be given in Proposition \ref{proposition:globalknowlesSternbergImprovement}.
\end{remark}
In order to prove Theorem \ref{theorem:mainResultPlanar}, we will require some basic properties of the singular value mapping $F\mapsto(\lambdamax(F),\lambdamin(F))$, starting with the following well-known regularity result.

\begin{lemma}[{\cite[Theorem 1]{de1989analytic} (cf.\ \cite[Theorem 5]{de2000multilinear} and \cite{vallee2006convex}}]
\label{lemma:singularValuesAbsoluteValueAnalyticFunctions}
	Let $X\col I\to\Rnn$ be an analytic, matrix-valued function on an interval $I\subset\R$. Then there exist analytic functions $\mu_1,\dotsc,\mu_n\col I\to\R$ such that for each $t\in I$, the singular values of $X(t)$ are given by $\abs{\mu_1(t)},\dotsc,\abs{\mu_n(t)}$.
\end{lemma}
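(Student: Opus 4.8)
The plan is to convert the singular-value parametrization of $X(\cdot)$ into an eigenvalue parametrization of a \emph{symmetric} analytic matrix family, for which a global real-analytic choice of eigenvalues is supplied by Rellich's classical one-parameter perturbation theorem. Concretely, I would introduce
\[
	\mathcal{X}(t) \colonequals \matr{0 & X(t) \\ X(t)^\top & 0}\,\in\,\R^{2n\times 2n}\,,
\]
which is symmetric and depends analytically on $t\in I$. For each fixed $t$, taking a singular value decomposition $X(t)=U\,\Sigma\,V^\top$ with $\Sigma=\diag(\sigma_1(t),\dotsc,\sigma_n(t))$ and conjugating $\mathcal{X}(t)$ by $\diag(U,V)$, then permuting the basis vectors, one obtains $\bigoplus_{i=1}^n\matr{0 & \sigma_i(t) \\ \sigma_i(t) & 0}$; hence the eigenvalues of $\mathcal{X}(t)$, counted with multiplicity, are precisely $\pm\sigma_1(t),\dotsc,\pm\sigma_n(t)$.

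Next I would invoke Rellich's theorem: a real-analytic family of symmetric matrices on an \emph{interval} admits a global real-analytic enumeration of its eigenvalues, unique up to one permutation of the indices. (The one-dimensionality of the parameter is exactly the hypothesis of the lemma and is essential — the conclusion is false for families on $\R^2$.) This gives analytic functions $\nu_1,\dotsc,\nu_{2n}\col I\to\R$ whose values at each $t$ list, with multiplicity, the eigenvalues of $\mathcal{X}(t)$. Since $\diag(\id,-\id)$ conjugates $\mathcal{X}(t)$ to $-\mathcal{X}(t)$, the tuple $(-\nu_1,\dotsc,-\nu_{2n})$ is again such an enumeration, so uniqueness yields a fixed permutation $\tau$ of $\{1,\dotsc,2n\}$ with $\nu_{\tau(j)}\equiv-\nu_j$ on all of $I$. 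Then $\tau$ is an involution whose fixed points are exactly the indices with $\nu_j\equiv0$; if it has $q$ two-cycles and $2r$ fixed points, then $2q+2r=2n$, i.e.\ $q+r=n$.

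It remains to carry out a purely combinatorial selection. I would let $S\subseteq\{1,\dotsc,2n\}$ consist of one index from each two-cycle of $\tau$ together with $r$ of its $2r$ fixed points, so that $\abs{S}=q+r=n$; its complement then satisfies $\{\nu_j(t)\setvert j\in S^{\mathrm c}\}=-\{\nu_j(t)\setvert j\in S\}$ as multisets (the two-cycle partners contribute $\nu_{\tau(j)}=-\nu_j$, and the leftover $r$ zeros are fixed by negation). Summing the two halves and comparing with the spectrum,
\[
	\{\nu_j(t)\setvert j\in S\}\,\uplus\,\bigl(-\{\nu_j(t)\setvert j\in S\}\bigr)\;=\;\{\sigma_1(t),\dotsc,\sigma_n(t)\}\,\uplus\,\bigl(-\{\sigma_1(t),\dotsc,\sigma_n(t)\}\bigr)\,,
\]
and applying $\abs{\,\cdot\,}$ to every entry of both sides (which just duplicates each multiset) forces $\{\abs{\nu_j(t)}\setvert j\in S\}=\{\sigma_1(t),\dotsc,\sigma_n(t)\}$ for all $t\in I$. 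Taking $\mu_1,\dotsc,\mu_n$ to be the analytic branches $\nu_j$ with $j\in S$ then proves the lemma.

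The step I expect to be most delicate is the bookkeeping that extracts the correct $n$ \emph{singular-value} branches out of the $2n$ eigenvalue branches — above all the assertion that the sign-pairing $\tau$ is one \emph{global} permutation and not merely a locally constant one, which rests on the rigidity of the one-parameter analytic enumeration and fails without connectedness of $I$. It is also worth noting why the $\abs{\mu_i}$ in the statement cannot be dropped: applying Rellich directly to the positive-semidefinite family $X(t)^\top X(t)$ gives only analytic eigenvalues $\alpha_i\geq0$, and near a zero $t_0$ of $\alpha_i$ of (necessarily even) order $2k$ one has $\sqrt{\alpha_i(t)}\sim\abs{t-t_0}^k$, which is not analytic for odd $k$; passing through $\mathcal{X}(t)$ is precisely what builds in the sign flexibility that repairs this.
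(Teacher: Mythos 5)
The paper itself offers no proof of this lemma: it is quoted verbatim, with attribution, as Theorem~1 of the cited work on the analytic singular value decomposition, so there is no internal argument to compare yours with. Judged on its own terms, your proof is sound and is essentially the standard route to this result: the symmetric dilation $\mathcal{X}(t)$ is analytic with spectrum $\{\pm\sigma_1(t),\dotsc,\pm\sigma_n(t)\}$, Rellich's one-parameter theorem gives a global analytic enumeration $\nu_1,\dotsc,\nu_{2n}$ on the interval, and the sign symmetry of the spectrum lets you extract $n$ branches whose absolute values are the singular values; your final multiset manipulation (doubling by taking absolute values and cancelling) is valid. Three steps are asserted rather than proved, and each needs a line. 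First, the \emph{uniqueness up to one permutation} is not part of Rellich's theorem itself; it follows from the identity theorem: for fixed $j$ the closed sets $\{t\in I \,:\, -\nu_j(t)=\nu_k(t)\}$, $k=1,\dotsc,2n$, cover $I$, so one of them has an accumulation point and hence $-\nu_j\equiv\nu_k$; comparing multiplicities of the finitely many distinct branch functions at a parameter value where they take pairwise distinct values (such values exist, since two distinct analytic branches agree only on a discrete set) shows that the multiset of branch functions is invariant under negation, which is what produces a single global bijection $\tau$. Second, a $\tau$ obtained this way need not automatically be an involution whose fixed points are exactly the identically vanishing branches, because identical branches may be permuted among themselves; it can, however, be \emph{chosen} with these properties, and that choice is all your selection of $S$ uses. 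Third, the count of \enquote{$2r$ fixed points} tacitly assumes that the number of identically vanishing branches is even; this follows by evaluating at a generic $t$, where that number equals the multiplicity of the eigenvalue $0$ of $\mathcal{X}(t)$, namely twice the number of vanishing singular values of $X(t)$. With these small patches the argument is complete, and your closing remark on why the absolute values cannot be dropped (odd-order zeros of the analytic eigenvalues of $X(t)^\top X(t)$ obstruct an analytic square root) is correct and consistent with the paper's subsequent corollary, which removes them only under the assumption that the determinant never vanishes.
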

If $\det(X(t))\neq0$ for all $t\in I$, then we can assume without loss of generality that the (nonzero) singular values of $X(t)$ are given directly by the analytic functions $\mu_i$ in Lemma \ref{lemma:singularValuesAbsoluteValueAnalyticFunctions}.
\begin{corollary}
\label{corollary:singularValuesAnalyticFunctions}
	Let $F,H\in\Rnn$, and let $I\subset\R$ be an interval such that $\det(F+tH)\neq0$ for all $t\in I$. Then there exist analytic functions $\mu_1,\dotsc,\mu_n$ such that for each $t\in I$, the singular values of $F+tH$ are given by $\mu_1(t),\dotsc,\mu_n(t)$.
\end{corollary}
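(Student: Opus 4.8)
The plan is to apply Lemma~\ref{lemma:singularValuesAbsoluteValueAnalyticFunctions} to the matrix-valued map $X\col I\to\Rnn$ given by $X(t)=F+tH$. This map is affine-linear in $t$, hence analytic (indeed polynomial, so it is analytic on all of $\R$ and there is no issue about the domain of analyticity). The lemma then provides analytic functions $\mu_1,\dotsc,\mu_n\col I\to\R$ such that, for every $t\in I$, the singular values of $F+tH$ are $\abs{\mu_1(t)},\dotsc,\abs{\mu_n(t)}$.

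Next I would invoke the nondegeneracy hypothesis $\det(F+tH)\neq0$ for all $t\in I$: this forces every singular value of $F+tH$ to be strictly positive, so $\mu_i(t)\neq0$ for all $i\in\{1,\dotsc,n\}$ and all $t\in I$. Since $I$ is an interval, it is connected, and each $\mu_i$ is continuous; hence by the intermediate value theorem $\mu_i$ cannot change sign on $I$, i.e.\ there is $\eps_i\in\{-1,+1\}$ with $\abs{\mu_i(t)}=\eps_i\.\mu_i(t)$ for all $t\in I$. Setting $\widetilde\mu_i\colonequals\eps_i\.\mu_i$ then yields analytic functions $\widetilde\mu_1,\dotsc,\widetilde\mu_n\col I\to\R$ whose values at each $t\in I$ are exactly the (positive) singular values of $F+tH$, which is the claim. (This is precisely the \enquote{without loss of generality} remark made immediately after Lemma~\ref{lemma:singularValuesAbsoluteValueAnalyticFunctions}.)

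I do not expect a genuine obstacle here: the whole content is the reduction to Lemma~\ref{lemma:singularValuesAbsoluteValueAnalyticFunctions} together with the elementary sign argument. The single point that requires care is the constant-sign conclusion, which relies essentially on $I$ being connected — on a disconnected parameter set the analytic branches $\mu_i$ could carry different signs on different components, and their absolute values would then fail to be analytic across the gap. Since $I$ is an interval by assumption, this is not a concern.
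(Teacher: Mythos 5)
Your argument is correct and coincides with the paper's own reasoning: the corollary is obtained by applying Lemma~\ref{lemma:singularValuesAbsoluteValueAnalyticFunctions} to the analytic (affine) map $t\mapsto F+tH$ and then using $\det(F+tH)\neq0$ together with the connectedness of $I$ to fix the sign of each $\mu_i$, exactly the \enquote{without loss of generality} remark the paper makes before stating the corollary. Nothing is missing.
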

The condition $\det(F+tH)\neq0$ is, in particular, satisfied for $t\in[0,1]$ if $F\in\GLpn$ and $\rank(H)=1$ with $F+H\in\GLpn$ due to the rank-one convexity of the set $\GLpn$. Note carefully that the mappings $\mu_i$ are not necessarily ordered on the interval $I$ (see Figure \ref{figure:singularValuesGraph}) and that the mapping $t\mapsto \lambdahat(F+tH)$ of $t$ to the ordered vector $\lambdahat(F+tH)$ of singular values of $F+tH$, although continuous, may indeed be non-differentiable on $I$ if non-simple singular values occur.

In the planar case, this specific situation of \enquote{intersecting} singular values, which is visualized in Fig.\ \ref{figure:singularValuesGraph}, allows for a direct comparison of the left and right derivatives of the ordered singular values $\lambdamax\geq\lambdamin$.

\begin{figure}[h]
	\begin{center}
		\begin{minipage}[t]{.4655\textwidth}
			\begin{center}
				\begin{tikzpicture}
					\begin{axis}[
						axis x line=bottom,axis y line=left,
						x label style={%
							at={(ticklabel* cs:.98)},
							above right
						},
						xlabel={$t$},
						xmin=-0.07, xmax=3.43,
						ymin=-0.07, ymax=3.43,
						xtick={1.001},
						ytick=\empty,
						xticklabels={$1$},
						yticklabels=\empty,
						hide obscured x ticks=false,
				        width=.9898\textwidth,
				        height=.7\textwidth
					]
						\addplot[domain=.98:-0.07,samples=\samples,thick,blue,dashed] {x}
							node[pos=.343, below right] {$\htilde$};
						\addplot[domain=0.245:1.001,samples=\samples,thick,red] {1/x}
							node[pos=.735, above right] {$h$};
						\addplot[domain=1.001:3.43,samples=\samples,thick,black] {x}
							node[pos=.49, below right] {$\hhat=h$};
					\end{axis}
				\end{tikzpicture}
			\par\end{center}
			\vspace*{-1.47em}
			\caption{\label{figure:conformalNonRegularityGraph}%
				The mapping $t\mapsto h(t)=\max\{t,\frac1t\}$ is not differentiable on $(0,\infty)$, whereas $\hhat\in C^\infty([1,\infty))$ with $\hhat(t)=t$ can be extended to a smooth function $\htilde$ on $(0,\infty)$.%
			}
		\end{minipage}
		\hfill
		\begin{minipage}[t]{.4655\textwidth}
			\begin{center}
				\hspace*{-1.47em}
				\begin{tikzpicture}
					\begin{axis}[
						axis x line=bottom,axis y line=left,
						x label style={%
							at={(ticklabel* cs:.98)},
							above right
						},
						xlabel={$t$},
						xmin=-.833, xmax=.833,
						ymin=-0.49, ymax=.49,
						xtick={0},
						ytick={0},
						xticklabels={$0$},
						yticklabels={$c$},
						hide obscured x ticks=false,
				        width=.9898\textwidth,
				        height=.7\textwidth
					]
						\addplot[domain=-1.015:0,samples=\samples,thick,blue] {sqrt(x+1.47)-sqrt(1.47)}
							node[pos=.343, below right] {$\lambdamin=\mu_2$};
						\addplot[domain=0:1.015,samples=\samples,thick,black] {sqrt(x+1.47)-sqrt(1.47)}
							node[pos=.637, above left] {$\lambdamax=\mu_2$};
						\addplot[domain=-1.015:0,samples=\samples,thick,black] {0.49*((x-.49)^2-.49^2)}
							node[pos=.735, below left = -0.347em] {$\lambdamax=\mu_1$};
							
						\addplot[domain=0:1.015,samples=\samples,thick,blue] {.49*((x-.49)^2-.49^2)}
							node[pos=.539, below] {$\lambdamin=\mu_1$};
					\end{axis}
				\end{tikzpicture}
			\par\end{center}
			\vspace*{-1.47em}
			\caption{\label{figure:singularValuesGraph}%
				Visualization of the singular values of $F+tH$ for $F=c\cdot Q$ with $c>0$ and $Q\in\SO(2)$, showing two analytic curves $\mu_1$ and $\mu_2$ intersecting at $t=0$; note that $\partial^-\lambdamax=\partial^+\lambdamin$ and $\partial^+\lambdamax=\partial^-\lambdamin$ at $t=0$.%
			}
		\end{minipage}
	\end{center}
\end{figure}
\begin{lemma}
\label{lemma:singularValueDerivativePropertiesPlanar}
	Let $F\in\GLp(2)$ with singular values $\lambda_1=\lambda_2=c$ and $H\in\R^{2\times2}$. %
	Then the mappings
	\[
		t\mapsto\lambdamax(t)\colonequals\lambdamax(F+tH)
		\qquad\text{and}\qquad
		t\mapsto\lambdamin(t)\colonequals\lambdamin(F+tH)
	\]
	are both left-differentiable and right-differentiable in a neighbourhood of $t=0$ with
	\begin{equation}\label{eq:planarOneSidedDerivativesEqualityInequality}
		\partial^-\lambdamax(0)=\partial^+\lambdamin(0)
		\leq
		\partial^-\lambdamin(0)=\partial^+\lambdamax(0)\,.
	\end{equation}
\end{lemma}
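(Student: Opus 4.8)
The plan is to reduce the statement to the elementary one-sided differentiability of $t\mapsto\abs{t}^k$ at the origin. First I would note that $\GLp(2)$ is open and $\det F = c^2>0$, so $F+tH\in\GLp(2)$ for all $t$ in some interval $U$ around $0$; Corollary~\ref{corollary:singularValuesAnalyticFunctions} then provides analytic functions $\mu_1,\mu_2\col U\to\R$ such that, for each $t\in U$, the two singular values of $F+tH$ are $\mu_1(t),\mu_2(t)$, with $\mu_1(0)=\mu_2(0)=c$. Writing
\[
	m\colonequals\tfrac{\mu_1+\mu_2}{2}\,,\qquad d\colonequals\tfrac{\abs{\mu_1-\mu_2}}{2}\,,
\]
we have $\lambdamax=m+d$ and $\lambdamin=m-d$ on $U$, and since $m$ is analytic the whole statement comes down to the one-sided behaviour of $d$ at $0$.

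Next I would analyse $d$ through the order of vanishing of the analytic function $f\colonequals\mu_1-\mu_2$ at $0$. If $f\equiv0$ on $U$, then $\lambdamax=\lambdamin=\mu_1$ is analytic and \eqref{eq:planarOneSidedDerivativesEqualityInequality} holds with all four one-sided derivatives equal to $\mu_1'(0)$. Otherwise $f(t)=t^k\,\psi(t)$ for some integer $k\geq1$ and some analytic $\psi$ with $\psi(0)\neq0$; in particular the zero of $f$ at $0$ is isolated, so after shrinking $U$ we may assume $f(t)\neq0$ for $t\in U\setminus\{0\}$, where $\lambdamax$ and $\lambdamin$ locally coincide with one of the analytic branches $\mu_1,\mu_2$ and are therefore differentiable. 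At $t=0$, a direct computation from $d(h)=\tfrac12\abs{h}^k\abs{\psi(h)}$ (for $h$ near $0$) shows that the one-sided derivatives of $d$ exist, with $\partial^+d(0)\equalscolon\delta\geq0$ and $\partial^-d(0)=-\delta$; explicitly $\delta=\tfrac12\abs{\psi(0)}>0$ if $k=1$ and $\delta=0$ if $k\geq2$. Consequently $\lambdamax$ and $\lambdamin$ are left- and right-differentiable throughout $U$, and at $0$
\[
	\partial^+\lambdamax(0)=m'(0)+\delta=\partial^-\lambdamin(0)\,,\qquad
	\partial^-\lambdamax(0)=m'(0)-\delta=\partial^+\lambdamin(0)\,,
\]
which gives the two equalities in \eqref{eq:planarOneSidedDerivativesEqualityInequality}; the middle inequality then reduces to $m'(0)-\delta\leq m'(0)+\delta$, i.e.\ to $\delta\geq0$.

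The main obstacle is really just bookkeeping: a priori we have no control over how the two analytic branches meet at $t=0$ — they may cross transversally ($k=1$, the generic picture of Fig.~\ref{figure:singularValuesGraph}), meet tangentially ($k\geq2$), or coincide identically ($f\equiv0$, e.g.\ when $F+tH$ stays conformal) — and the non-transversal cases are precisely the ones where one must verify that the relevant one-sided derivatives still agree. The order-of-vanishing decomposition above handles all three uniformly; the one fact that makes it work is that the absolute value of an analytic function vanishing at $0$ still has well-defined, sign-opposite one-sided derivatives there, a property that passes to $d=\tfrac12\abs{f}$ and, since $m$ is genuinely differentiable, to $\lambdamax=m+d$ and $\lambdamin=m-d$.
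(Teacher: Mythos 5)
Your argument is correct, and it rests on the same foundation as the paper's proof — Corollary \ref{corollary:singularValuesAnalyticFunctions} provides the two analytic branches $\mu_1,\mu_2$, and the isolatedness of the zeros of the analytic difference $f=\mu_1-\mu_2$ controls what happens near $t=0$ — but the bookkeeping is genuinely different. The paper argues by a case distinction on whether the branches swap order at $t=0$: if they cross, $\lambdamax$ and $\lambdamin$ exchange branches and the two equalities in \eqref{eq:planarOneSidedDerivativesEqualityInequality} are read off directly, with the inequality coming from $\lambdamin(t)\leq\lambdamax(t)$ and $\lambdamin(0)=\lambdamax(0)$; if they do not cross, both maps are differentiable at $0$ and the claim is declared trivial (implicitly, one also needs $\mu_1'(0)=\mu_2'(0)$ there, which holds because $\mu_1-\mu_2\geq0$ attains a minimum at $0$ or vanishes identically). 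Your decomposition $\lambdamax=m+d$, $\lambdamin=m-d$ with $m=\frac12(\mu_1+\mu_2)$ analytic and $d=\frac12\abs{f}$, combined with writing $f(t)=t^k\psi(t)$, $\psi(0)\neq0$, treats the transversal, tangential and identically-coinciding cases uniformly: all the information sits in the single quantity $\delta=\partial^+d(0)=-\partial^-d(0)\geq0$, the equalities in \eqref{eq:planarOneSidedDerivativesEqualityInequality} become $m'(0)\mp\delta$, and the inequality is just $\delta\geq0$. What your route buys is that the non-crossing case (where the paper is terse) is handled explicitly via $\delta=0$ for $k\geq2$ or $f\equiv0$, and the elementary fact you isolate — that the modulus of an analytic function vanishing at $0$ has opposite one-sided derivatives there — is exactly the right reusable kernel; what the paper's route buys is brevity, since in the crossing case no computation with $d$ is needed at all. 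Both proofs are complete and cover the required one-sided differentiability in a whole neighbourhood of $t=0$, not just at the origin.
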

\begin{proof}

	Let $\mu_1,\mu_2$ be the singular value functions given by Corollary \ref{corollary:singularValuesAnalyticFunctions}.
	Since $\mu_1(0)=\mu_2(0)=c$, and since the roots of the analytic function $\mu_1-\mu_2$ are isolated unless $\mu_1\equiv\mu_2$, we can assume that there exists $\eps>0$ such that $\mu_1(t)\geq\mu_2(t)$ for all $t\in(-\eps,0]$. Then either $\mu_1(t)\geq\mu_2(t)$ for all sufficiently small $t>0$ as well, in which case the mappings $t\mapsto\lambdamax(t)=\mu_1(t)$ and $t\mapsto\lambdamin(t)=\mu_2(t)$ are both differentiable at $t=0$ and thus \eqref{eq:planarOneSidedDerivativesEqualityInequality} is trivially satisfied; or $\mu_1(t)<\mu_2(t)$ for all sufficiently small $t>0$, in which case
	\[
		\lambdamax(t) =
		\begin{cases}
			\mu_1(t) &: t\leq0\\
			\mu_2(t) &: t\geq0
		\end{cases}
		\qquad\text{and}\qquad
		\lambdamin(t) =
		\begin{cases}
			\mu_2(t) &: t\leq0\\
			\mu_1(t) &: t\geq0
		\end{cases}
		\;.
	\]
	Then the one-sided derivatives of $\lambdamax$ and $\lambdamin$ are well defined with
	\[
		\partial^-\lambdamax(0) = \mu_1'(0) = \partial^+\lambdamin(0)
		\qquad\text{and}\qquad
		\partial^+\lambdamax(0) = \mu_2'(0) = \partial^-\lambdamin(0)
		\,.
	\]
	Finally, $\lambdamax(0)=c=\lambdamin(0)$ and $\lambdamin(t)\leq\lambdamax(t)$ for all $t>0$, which directly yields the remaining inequality $\partial^+\lambdamin(0)\leq\partial^+\lambdamax(0)$.
\end{proof}

The next lemma, which follows directly from a more general characterization of rank-one connectedness with applications to multi-well problems in the calculus of variations \cite[Lemma 8.25]{rindler2018calculus}, states that there are no \emph{rank-one connected} matrices in the conformal special orthogonal group $\CSO(2)=\Rp\cdot\SO(2)$. The subsequent corollary ensures that under the regularity assumptions of Theorem \ref{theorem:mainResultPlanar}, the mapping $t\mapsto W(F+tH)$ is, for any $F\in\GLp(2)$ and any admissible rank-one direction $H\in\R^{2\times2}$, indeed twice differentiable on $(0,1)\setminus\{t_0\}$ for some $t_0\in(0,1)$.
\begin{lemma}[{\cite[Lemma 8.25]{rindler2018calculus}}]
\label{lemma:CSOnoRankOneConnections}
	Let $\genCSO_1,\genCSO_2\in\CSO(2)$. Then $\rank(\genCSO_2-\genCSO_1)\neq1$.
\end{lemma}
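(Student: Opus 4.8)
The plan is to reduce everything to the explicit matrix description of $\CSO(2)$. First I would recall that a matrix $\genCSO\in\R^{2\times2}$ belongs to $\CSO(2)=\Rp\cdot\SO(2)$ if and only if
\[
	\genCSO = \matr{a&-b\\b&a}\qquad\text{for some }\;(a,b)\in\R^2\setminus\{0\}\,:
\]
writing $Q\in\SO(2)$ as the rotation by an angle $\theta$ and $\lambda>0$ for the conformal factor gives $a=\lambda\cos\theta$ and $b=\lambda\sin\theta$ with $a^2+b^2=\lambda^2>0$, and conversely every nonzero pair $(a,b)$ arises in this way with $\lambda=\sqrt{a^2+b^2}$.

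The key observation is then that the set of all matrices of the above form, now allowing $(a,b)=0$ as well — that is, $\CSO(2)\cup\{0\}$ — is a two-dimensional linear subspace of $\R^{2\times2}$ (indeed, a subalgebra isomorphic to the field of complex numbers); in particular it is closed under subtraction. Hence for $\genCSO_1,\genCSO_2\in\CSO(2)$ with entries $(a_1,b_1)$ and $(a_2,b_2)$, the difference is again of the same shape,
\[
	\genCSO_2-\genCSO_1 = \matr{c&-d\\d&c}\,,\qquad c\colonequals a_2-a_1\,,\quad d\colonequals b_2-b_1\,,
\]
now with $(c,d)\in\R^2$ arbitrary (possibly zero).

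Finally I would simply compute the determinant $\det(\genCSO_2-\genCSO_1)=c^2+d^2$. If $(c,d)=(0,0)$, then $\genCSO_2-\genCSO_1=0$ and $\rank(\genCSO_2-\genCSO_1)=0$; if $(c,d)\neq(0,0)$, then $\det(\genCSO_2-\genCSO_1)=c^2+d^2>0$, so $\genCSO_2-\genCSO_1$ is invertible and $\rank(\genCSO_2-\genCSO_1)=2$. In either case the rank is not equal to $1$, which is the claim. I do not expect any genuine obstacle here: the whole argument is the standard parametrization of $\SO(2)$ together with the elementary fact that a nonzero matrix of the form $\matr{c&-d\\d&c}$ has strictly positive determinant; the only point requiring a moment of care is checking, in the first step, that this matrix form really does exhaust $\CSO(2)$.
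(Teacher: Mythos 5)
Your proof is correct, and it takes a different (more self-contained) route than the paper: the paper does not prove this lemma at all, but simply cites it as a consequence of a more general characterization of rank-one connectedness from \cite[Lemma 8.25]{rindler2018calculus}, which is formulated with multi-well problems in the calculus of variations in mind. You instead argue directly from the matrix description of $\CSO(2)$: every element has the form $\matr{a&-b\\b&a}$ with $(a,b)\neq(0,0)$, the set $\CSO(2)\cup\{0\}$ is a two-dimensional linear subspace of $\R^{2\times2}$ (isomorphic to $\C$), hence closed under differences, and the determinant of any such difference is $c^2+d^2$, which is either zero (rank $0$) or strictly positive (rank $2$), so rank one is impossible. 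All steps check out, including the one point you flag yourself, namely that the parametrization exhausts $\CSO(2)$ (given $(a,b)\neq 0$, set $\lambda=\sqrt{a^2+b^2}>0$ and verify the normalized matrix lies in $\SO(2)$). What your argument buys is a short, elementary, fully self-contained proof requiring nothing beyond a $2\times2$ determinant; what the paper's citation buys is generality, since the quoted result also handles rank-one connections between distinct conformal wells of the type $\SO(2)A\cup\SO(2)B$, which is more than is needed here.
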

\begin{corollary}
\label{corollary:singleNonSimplePoint}
	Let $F\in\GLp(2)$ and $F\in\R^{2\times2}$ such that $\rank(H)=1$ and $F+H\in\GLp(2)$. Then there exists no more than one $t\in[0,1]$ such that $F+tH$ has non-simple singular values.
\end{corollary}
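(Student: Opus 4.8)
The plan is to argue by contradiction using Lemma \ref{lemma:CSOnoRankOneConnections} together with the characterization, recalled at the beginning of Section \ref{section:planarCase}, that a matrix in $\GLp(2)$ has non-simple singular values if and only if it is conformal, i.e.\ lies in $\CSO(2)=\Rp\cdot\SO(2)$.

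So suppose, for contradiction, that there exist two distinct parameters $t_1,t_2\in[0,1]$, say $t_1\neq t_2$, such that both $F+t_1H$ and $F+t_2H$ have non-simple singular values. By the conformality characterization, this means $F+t_1H\in\CSO(2)$ and $F+t_2H\in\CSO(2)$. Now consider the difference
\[
	(F+t_2H) - (F+t_1H) = (t_2-t_1)\.H\,.
\]
Since $t_2-t_1\neq0$ and $\rank(H)=1$, the matrix $(t_2-t_1)H$ also has rank one, so $F+t_1H$ and $F+t_2H$ are rank-one connected. This contradicts Lemma \ref{lemma:CSOnoRankOneConnections}, which asserts that no two elements of $\CSO(2)$ can differ by a rank-one matrix. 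Hence at most one $t\in[0,1]$ can yield non-simple singular values.

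There is essentially no real obstacle here: the statement is a one-line consequence of the two ingredients just mentioned. The only point worth being careful about is that we must invoke the equivalence \enquote{non-simple singular values $\iff$ conformal} specifically for matrices in $\GLp(2)$ (so that the singular values are positive and the conformal factor $\lambda$ is in $\Rp$), which is exactly the setting here since $F,F+H\in\GLp(2)$ and, by rank-one convexity of $\GLpn$, $F+tH\in\GLp(2)$ for all $t\in[0,1]$. With that in place the corollary follows immediately, and it is precisely what is needed in the proof of Theorem \ref{theorem:mainResultPlanar} to reduce to the scalar situation of Lemma \ref{lemma:convexityOneSidedDerivatives} with a single exceptional point $t_0$.
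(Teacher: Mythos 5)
Your proof is correct and follows exactly the paper's own argument: identify non-simple singular values with membership in $\CSO(2)$, then note that two such matrices along the rank-one line would differ by $(t_2-t_1)H$ of rank one, contradicting Lemma \ref{lemma:CSOnoRankOneConnections}. Nothing to add.
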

\begin{proof}
	Recall that $\genCSO\in\GLp(2)$ has non-simple singular values if and only if $\genCSO\in\CSO(2)$. Now, if both $\genCSO_1\colonequals F+t_1H$ and $\genCSO_2\colonequals F+t_2H$ for $t_1\neq t_2$, then $\genCSO_1,\genCSO_2\in\CSO(2)$ with
	\[
		\rank(\genCSO_2 - \genCSO_1) = \rank((t_2-t_1)\.H) = \rank(H) = 1\,,
	\]
	in contradiction to Lemma \ref{lemma:CSOnoRankOneConnections}.
\end{proof}
\medskip\noindent
We can now proceed to the proof of our main result for the planar case.
\begin{proof}[Proof of Theorem \ref{theorem:mainResultPlanar}]
	We need to show that under the stated assumptions, the mapping $t\mapsto W(F+tH)$ is convex on $[0,1]$ for any $F\in\GLp(2)$ and any $H\in\R^{2\times2}$ with $\rank(H)=1$ and $F+H\in\GLp(2)$. If the singular values of $F+tH$ are simple for all $t\in(0,1)$, then the regularity and ellipticity of $W$ directly yield (cf.\ \eqref{eq:positiveSecondDerivativeRankOneDirection})
	\[
		\ddtsq\; W(F+tH) = D^2W[F+tH].(H,H) \geq 0
	\]
	and thus the convexity of the mapping. Otherwise, due to Corollary \ref{corollary:singleNonSimplePoint}, there exists a unique $t_0\in(0,1)$ such that $F+t_0H$ has non-simple singular values or, equivalently, $F+t_0H\in\CSO(2)$. After re\-parameterization and rescaling of $H$, it therefore remains to show that for every $c>0$ and $Q\in\SO(2)$, the mapping
	\[
		p\col[-1,1]\to\R\,,\qquad p(t) = W(c\.Q+tH) = \ghat(\lambdamax(t),\lambdamin(t))
	\]
	is convex, where we use the notation $\lambdamax(t)=\lambdamax(c\.Q+tH)$ and $\lambdamin(t)=\lambdamin(c\.Q+tH)$.
	
	Again, due to the assumption of ellipticity on $\GLp(2)\setminus\CSO(2)$, we find $p''(t)\geq0$ for all $t\neq0$. According to Lemma \ref{lemma:convexityOneSidedDerivatives}, we thus only need to show that $\partial^+ p(0)\geq\partial^- p(0)$. Using Lemma \ref{lemma:singularValueDerivativePropertiesPlanar}, we compute
	\begin{align}
		\partial^+ p(0) &= \partial^+ \ghat(\lambdamax(t),\lambdamin(t)) \big|_{t=0}\notag\\
		&= \pdd{\,\ghat}{\lambdamax}(\lambdamax(0),\lambdamin(0)) \cdot \partial^+ \lambdamax(0) + \pdd{\,\ghat}{\lambdamin}(\lambdamax(0),\lambdamin(0)) \cdot \partial^+ \lambdamin(0)\\
		&= \pdd{\,\ghat}{\lambdamax}(c,c) \cdot \partial^+ \lambdamax(0) + \pdd{\,\ghat}{\lambdamin}(c,c) \cdot \partial^- \lambdamax(0)\notag
	\intertext{and}
		\partial^- p(0) &= \partial^- \ghat(\lambdamax(t),\lambdamin(t)) \big|_{t=0}\notag\\
		&= \pdd{\,\ghat}{\lambdamax}(\lambdamax(0),\lambdamin(0)) \cdot \partial^- \lambdamax(0) + \pdd{\,\ghat}{\lambdamin}(\lambdamax(0),\lambdamin(0)) \cdot \partial^- \lambdamin(0)\\
		&= \pdd{\,\ghat}{\lambdamax}(c,c) \cdot \partial^- \lambdamax(0) + \pdd{\,\ghat}{\lambdamin}(c,c) \cdot \partial^+ \lambdamax(0)\,.\notag
	\end{align}
	Therefore, the inequality $\partial^+ p(0)\geq\partial^- p(0)$ is equivalent to
	\begin{equation}\label{eq:mainResultProofInequalityWithLambdaMax}
		\pdd{\,\ghat}{\lambdamax}(c,c) \cdot (\partial^+ \lambdamax(0) - \partial^- \lambdamax(0))
		\;\geq\;
		\pdd{\,\ghat}{\lambdamin}(c,c) \cdot (\partial^+ \lambdamax(0) - \partial^- \lambdamax(0))\,.
	\end{equation}
	Since $\partial^+ \lambdamax(0) \geq \partial^- \lambdamax(0)$ according to Lemma \ref{lemma:singularValueDerivativePropertiesPlanar}, inequality \eqref{eq:mainResultProofInequalityWithLambdaMax} can be further simplified to
	\begin{equation}\label{eq:mainResultProofInequalitySimplified}
		\pdd{\,\ghat}{\lambdamax}(c,c) \geq \pdd{\,\ghat}{\lambdamin}(c,c)\,.
	\end{equation}
	Recall from Section \ref{section:BEinequalities} (cf.\ Proposition \ref{proposition:knowlesSternbergOriginal}) that the Baker-Ericksen inequalities,%
	\footnote{%
		Note that, although we provide the explicit computation again due to the change in notation for the planar case, the statement \eqref{eq:mainResultProofInequalitySimplified} also follows directly from \eqref{eq:partialDerivativeOrderingImpliedByBEinequalities} with $n=2$, $\lambda=c$, $\imin[]=1$ and $\imax[]=2$.%
	}
	which in the planar case can be expressed as
	\[
		\lambda_1\cdot\pdd{\,\ghat}{\lambdamax}(\lambda_1,\lambda_2)
		\;\geq\; \lambda_2\cdot\pdd{\,\ghat}{\lambdamin}(\lambda_1,\lambda_2)
		\,,
	\]
	hold for all $\lambda_1,\lambda_2\in\Rp$ with $\lambda_1>\lambda_2$ due to the assumed ellipticity. Since $\ghat\in C^1(\Oset[2])$ by assumption, the continuity of the derivative of $\ghat$ yields
	\[
		c\cdot\pdd{\,\ghat}{\lambdamax}(c,c)
		\;=\; \lim_{\lambda\upto c}\, c\cdot\pdd{\,\ghat}{\lambdamax}(c,\lambda)
		\;\geq\; \lim_{\lambda\upto c}\, \lambda\cdot\pdd{\,\ghat}{\lambdamin}(c,\lambda)
		\;=\; c\cdot\pdd{\,\ghat}{\lambdamin}(c,c)
		\,,
	\]
	which establishes \eqref{eq:mainResultProofInequalitySimplified} and thereby proves the theorem.
\end{proof}

\begin{remark}
\label{remark:strictRankOneConvexityPlanarCase}
	If, under the assumptions of Theorem \ref{theorem:mainResultPlanar}, the function $W$ is also \emph{strictly elliptic} on $\GLp(2)\setminus\CSO(2)$, i.e.\ if strict inequality holds in \eqref{eq:positiveSecondDerivativeRankOneDirection} for each $F\in\CSO(2)$, then it is easy to see that $W$ is \emph{strictly rank-one convex} \cite{agn_neff2016injectivity,agn_schweickert2019nonhomogeneous} on $\GLp(2)$ as well, i.e.\ that strict inequality holds in \eqref{definition:rankOneConvexity}.
\end{remark}

\subsection{The Knowles-Sternberg ellipticity criterion}
\label{section:knowlesSternberg}

One of the most important criteria for ellipticity of planar isotropic functions is the following result by Knowles and Sternberg \cite{knowles1976failure,knowles1978failure}.

\begin{proposition}[{Knowles and Sternberg \cite{knowles1976failure,knowles1978failure}, cf.\ \cite[p.~308]{silhavy1997mechanics}}]
\label{proposition:knowlesSternbergOriginal}
	Let $W\col\GLp(2)\to\R$ be an objective-isotropic function with  $W(F)=g(\lambda_1,\lambda_2)$ for all $F\in\GLp(2)$ with singular values $\lambda_1,\lambda_2$, where \mbox{$g\col\Rp^2\to\R$.}
	If $W$ is two-times continuously differentiable at $F\in\GLp(2)$ with singular values $\lambda_1,\lambda_2$, then $W$ is Legendre-Hadamard elliptic at $F$ if and only if $g$ satisfies the following conditions at $(\lambda_1,\lambda_2)$:
	\begin{alignat*}{2}
		\text{i)}&\qquad g_{11}\;\geq\; 0 \qquad\text{and}\qquad g_{22}\;\geq\;0\,,\\
		\text{ii)}&\qquad \frac{\lambda_1\.g_1-\lambda_2\.g_2}{\lambda_1-\lambda_2}\;\geq\; 0
			&&\qquad\text{if }\; \lambda_1\neq \lambda_2\,, \hspace*{.21\textwidth}\\
		\text{iii)}&\qquad g_{11}-g_{12}+\frac{g_1}{\lambda_1}\;\geq\; 0 \quad\text{ and }\quad g_{22}-g_{12}+\frac{g_2}{\lambda_2}\;\geq\; 0
			&&\qquad\text{if }\; \lambda_1=\lambda_2\,,\\
		\text{iv)}&\qquad \sqrt{g_{11}\,g_{22}}+g_{12}+\frac{g_1-g_2}{\lambda_1-\lambda_2}\;\geq\; 0
			&&\qquad\text{if }\; \lambda_1\neq \lambda_2\,,\\
		\text{v)}&\qquad \sqrt{g_{11}\,g_{22}}-g_{12}+\frac{g_1+g_2}{\lambda_1+\lambda_2}\;\geq\; 0\,,
	\end{alignat*}
	where $g_i=\pdd{g}{\lambda_i}(\lambda_1,\lambda_2)$ and $g_{ij}=\pdd[2]{g}{\lambda_1\,\partial\lambda_2}(\lambda_1,\lambda_2)$.
	\directqed
\end{proposition}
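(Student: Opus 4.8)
The plan is to prove the equivalence by computing the Legendre--Hadamard form $D^2W[F].(\xi\otimes\eta,\xi\otimes\eta)$ of Definition~\ref{definition:ellipticity} explicitly in terms of $g$ and its first and second derivatives, and then characterizing its nonnegativity over all $\xi,\eta\in\R^2$ as a purely algebraic condition. First I would reduce to a diagonal argument by \emph{frame invariance}: objectivity and isotropy give $W(Q_1XQ_2)=W(X)$ for all $Q_1,Q_2\in\SO(2)$, so (since the map $X\mapsto Q_1XQ_2$ is linear) $D^2W[X].(H,H)=D^2W[Q_1XQ_2].(Q_1HQ_2,Q_1HQ_2)$; writing $\xi'=Q_1\xi$, $\eta'=Q_2^\top\eta$ for an SVD $F=Q_1\diag(\lambda_1,\lambda_2)Q_2$ with $Q_1,Q_2\in\SO(2)$ and using $Q_1\.(\xi\otimes\eta)\.Q_2=(Q_1\xi)\otimes(Q_2^\top\eta)$, one gets $D^2W[\diag(\lambda_1,\lambda_2)].(\xi\otimes\eta,\xi\otimes\eta)=D^2W[F].(\xi'\otimes\eta',\xi'\otimes\eta')$, and since $\xi,\eta$ are arbitrary the Legendre--Hadamard condition at $F$ is equivalent to the one at $\diag(\lambda_1,\lambda_2)$. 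So assume $F=\diag(\lambda_1,\lambda_2)$ and write $H=\xi\otimes\eta$, $H_{ij}=\xi_i\eta_j$; by the chain rule,
\[
	D^2W[F].(H,H)=g_{11}\.\dot\lambda_1^2+2\.g_{12}\.\dot\lambda_1\dot\lambda_2+g_{22}\.\dot\lambda_2^2+g_1\.\ddot\lambda_1+g_2\.\ddot\lambda_2\,,
\]
where $\lambda_i(t)$ are the singular values of $F+tH$, labelled so as to be as regular as possible near $t=0$, and the dots denote derivatives at $t=0$. The heart of the matter is the first- and second-order expansion of the $\lambda_i(t)$.

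If $\lambda_1\neq\lambda_2$, the eigenvalues $\lambda_i(t)^2$ of $(F+tH)^\top(F+tH)$ are simple at $t=0$, and classical eigenvalue perturbation gives $\dot\lambda_1=H_{11}$, $\dot\lambda_2=H_{22}$ together with explicit $\ddot\lambda_i$ whose only non-trivial part involves the off-diagonal entries $H_{12},H_{21}$ and the resolvent factor $(\lambda_1^2-\lambda_2^2)^{-1}$. Inserting these and collecting terms (using $H_{12}H_{21}=H_{11}H_{22}=\xi_1\xi_2\eta_1\eta_2$) I expect to obtain
\[
	D^2W[F].(\xi\otimes\eta,\xi\otimes\eta)=g_{11}\.\xi_1^2\eta_1^2+2\.R\.\xi_1\xi_2\eta_1\eta_2+g_{22}\.\xi_2^2\eta_2^2+P\.\bigl(\xi_1^2\eta_2^2+\xi_2^2\eta_1^2\bigr)
\]
with $P=\dfrac{\lambda_1g_1-\lambda_2g_2}{\lambda_1^2-\lambda_2^2}$ and $R=g_{12}+\dfrac{\lambda_2g_1-\lambda_1g_2}{\lambda_1^2-\lambda_2^2}$. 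If instead $\lambda_1=\lambda_2=\lambda$, the two singular values of $F+tH$ cross at $t=0$ and equal $\lambda+\tfrac12(\xi\cdot\eta)\.t\pm\tfrac12\.\abs{\xi}\abs{\eta}\.\abs{t}+O(t^2)$, so the ordered singular values are in general only Lipschitz there; however, since $g$ is symmetric, so that $g_1=g_2$ and $g_{11}=g_{22}$ at $(\lambda,\lambda)$, the non-smooth $\abs{t}$-terms cancel in $t\mapsto W(F+tH)$, and one finds
\[
	D^2W[\lambda\.\id].(\xi\otimes\eta,\xi\otimes\eta)=(\xi\cdot\eta)^2\.g_{11}+\tfrac12\.\bigl(\abs{\xi}^2\abs{\eta}^2-(\xi\cdot\eta)^2\bigr)\.\Bigl(g_{11}-g_{12}+\tfrac{g_1}{\lambda}\Bigr)\,,
\]
all derivatives evaluated at $(\lambda,\lambda)$.

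The remaining step is elementary. In the equal case, the right-hand side is a nonnegative combination of $(\xi\cdot\eta)^2$ and $\abs{\xi}^2\abs{\eta}^2-(\xi\cdot\eta)^2$, which can be made to vanish independently (take $\xi\parallel\eta$, resp.\ $\xi\perp\eta$), so nonnegativity for all $\xi,\eta$ holds iff $g_{11}\geq0$ and $g_{11}-g_{12}+g_1/\lambda\geq0$; using $g_{11}=g_{22}$, $g_1=g_2$ this is exactly conditions i) and iii), and it also implies v), which for $\lambda_1=\lambda_2$ reads the same. In the distinct case, read the Hessian form as a quadratic form in $(\xi_1,\xi_2)$ with $\eta$-dependent coefficients: it is positive semidefinite for all $\xi$ if and only if, for every $\eta$,
\[
	g_{11}\eta_1^2+P\.\eta_2^2\geq0\,,\qquad g_{22}\eta_2^2+P\.\eta_1^2\geq0\,,\qquad \bigl(g_{11}\eta_1^2+P\.\eta_2^2\bigr)\bigl(g_{22}\eta_2^2+P\.\eta_1^2\bigr)\geq R^2\.\eta_1^2\eta_2^2\,.
\]
The first two, imposed for all $\eta$, are equivalent to $g_{11},g_{22}\geq0$ and $P\geq0$, i.e.\ to i) and --- since $\lambda_1+\lambda_2>0$ --- to the Baker--Ericksen inequality ii) (cf.\ Section~\ref{section:BEinequalities}). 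The third, viewed as a quadratic form in the nonnegative variables $(\eta_1^2,\eta_2^2)$ that must be nonnegative on the first quadrant, is (given $g_{11},g_{22},P\geq0$) equivalent to $\bigl(\sqrt{g_{11}g_{22}}+P\bigr)^2\geq R^2$, i.e.\ to the pair $\sqrt{g_{11}g_{22}}+P\pm R\geq0$; and the identities $P+R=g_{12}+\frac{g_1-g_2}{\lambda_1-\lambda_2}$, $P-R=-g_{12}+\frac{g_1+g_2}{\lambda_1+\lambda_2}$ turn these into precisely iv) and v). This yields the equivalence.

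The main obstacle is the perturbation analysis in the \emph{degenerate} case $\lambda_1=\lambda_2$: the ordered singular values are not differentiable at $t=0$, so they cannot be Taylor-expanded naively, and the symmetry of $g$ must be used carefully to see that the non-smooth contributions cancel before the second derivative of $t\mapsto W(F+tH)$ at $t=0$ can be read off --- this also tacitly uses that $g$ is genuinely $C^2$ near $(\lambda,\lambda)$, which the $C^2$-regularity of $W$ at $F$, together with the differentiability results cited earlier, provides. Everything else is routine: the explicit second-order perturbation formulas for the singular values, the algebraic identification of $P$ and $R$, and the two elementary facts that a real quadratic form $a\.u^2+b\.uv+c\.v^2$ is nonnegative for all $u,v\geq0$ if and only if $a,c\geq0$ and $b\geq-2\sqrt{ac}$, and that a quadratic form in $\xi$ with $\eta$-dependent coefficients is nonnegative for all $\xi$ and $\eta$ if and only if its two diagonal coefficients and its discriminant are nonnegative for every $\eta$.
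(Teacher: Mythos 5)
Your proposal is correct, but note that the paper itself does not prove this proposition at all: it is quoted as a classical result of Knowles and Sternberg (with Šilhavý's book as a reference) and marked as such, so there is no internal proof to compare against. What you have written is essentially a faithful reconstruction of the classical argument from the cited literature: reduce to $F=\diag(\lambda_1,\lambda_2)$ by objectivity and isotropy, use the standard representation of $D^2W[F].(H,H)$ for isotropic functions in terms of $g$ (your coefficients $P=\frac{\lambda_1 g_1-\lambda_2 g_2}{\lambda_1^2-\lambda_2^2}$ and $R=g_{12}+\frac{\lambda_2 g_1-\lambda_1 g_2}{\lambda_1^2-\lambda_2^2}$ are exactly the classical ones, and your quadratic-form/copositivity reduction to i), ii), iv), v) via $P\pm R$ is the standard bookkeeping), and treat the coalescent case $\lambda_1=\lambda_2$ by a separate expansion; I checked your formula $D^2W[\lambda\.\id].(\xi\otimes\eta,\xi\otimes\eta)=(\xi\cdot\eta)^2 g_{11}+\tfrac12\bigl(\abs{\xi}^2\abs{\eta}^2-(\xi\cdot\eta)^2\bigr)\bigl(g_{11}-g_{12}+\tfrac{g_1}{\lambda}\bigr)$ and it is right. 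Two small remarks that would tighten the write-up: the cancellation of the non-smooth $\abs{t}$-terms in the degenerate case is most cleanly justified by working with the analytic (unordered) singular-value branches of Corollary \ref{corollary:singularValuesAnalyticFunctions} (or directly with $\mu_1+\mu_2$ and $\mu_1\mu_2$, which are smooth functions of $t$ via trace and determinant), rather than with the merely Lipschitz ordered values; and the local $C^2$-regularity of $g$ near $(\lambda,\lambda)$ does not require the Ball/Šilhavý transfer theorems, since after the frame-invariance reduction one simply has $g(x,y)=W(\diag(x,y))$ in a neighbourhood, so $g$ inherits $C^2$ from $W$ by composition with a smooth map.
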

\noindent
Note that condition ii) is identical to the Baker-Ericksen inequalities \eqref{eq:BEinequalities} for the planar case $n=2$.

Many similar criteria, some of which are based on the work of Knowles and Sternberg, can be found throughout the literature, for example in
\cite[p.~308]{silhavy1997mechanics}, \cite[Proposition 7]{dacorogna01}, \cite[Proposition 6.4]{vsilhavy1999isotropic}, \cite[p.~293]{davies1991simple}, \cite[Theorem 2]{de2012note}, \cite[Theorem 4.2]{simpson1983copositive}, \cite{wang1996reformulation}. Note that in each of these cases, the energy $W$ is assumed to be at least two-times differentiable.

Of course, if $W\in C^2(\GLp(2))$, then Proposition \ref{proposition:knowlesSternbergOriginal} can be used as a necessary and sufficient criterion for global rank-one convexity by applying the appropriate inequalities i)--v) to each $(\lambda_1,\lambda_2)\in\Rp^2$. Employing the Knowles-Sternberg criterion (or related criteria) for this purpose is a common practice in a wide variety of applications \cite{agn_neff2015exponentiatedI,agn_martin2018non}.\footnote{%
	While criteria for rank-one convexity under weakened regularity assumptions are available as well (e.g.\ \cite[Proposition 1]{aubert1995necessary}, \cite[Theorem 6.3]{vsilhavy1999isotropic}, \cite[Proposition 5.5]{vsilhavy2002convexity}, \cite[Theorem 3.2]{vsilhavy2003so} or \cite[Proposition 5.16]{Dacorogna08}), they are generally non-local and more difficult to apply.%
}
Using Theorem \ref{theorem:mainResultPlanar}, we can now reduce the regularity requirements imposed by Proposition \ref{proposition:knowlesSternbergOriginal}.

\begin{proposition}
\label{proposition:globalknowlesSternbergImprovement}
	Let $W\col\GLp(2)\to\R$ be an objective-isotropic function with  $W(F)=\ghat(\lambda_1,\lambda_2)$ for all $F\in\GLp(2)$ with singular values $\lambda_1\geq\lambda_2$, where $\ghat\col\Oset[2]\to\R$ is a real-valued function on the set $\Oset[2]=\{(x,y)\in\Rp^2 \setvert x\geq y\}$.
	If $\ghat\in C^1(\Oset[2])\cap C^2(\intOset[2])$ and
	\begin{alignat*}{2}
		\text{i)}&\qquad \ghat_{11}\geq 0 \qquad\text{and}\qquad \ghat_{22}\geq0\,,\qquad\qquad %
		&&\text{ii)}\qquad \frac{\lambda_1\.\ghat_1-\lambda_2\.\ghat_2}{\lambda_1-\lambda_2}\geq 0\,,\\
		\text{iii)}&\qquad \sqrt{\ghat_{11}\,\ghat_{22}}+\ghat_{12}+\frac{\ghat_1-\ghat_2}{\lambda_1-\lambda_2}\geq 0\,,\qquad\qquad
		&&\text{iv)}\qquad \sqrt{\ghat_{11}\,\ghat_{22}}-\ghat_{12}+\frac{\ghat_1+\ghat_2}{\lambda_1+\lambda_2}\geq 0\,,
	\end{alignat*}
	at each $(\lambda_1,\lambda_2)\in\Rp$ with $\lambda_1>\lambda_2$, where $\ghat_i=\pdd{\,\ghat}{\lambda_i}(\lambda_1,\lambda_2)$ and $\ghat_{ij}=\pdd[2]{\ghat}{\lambda_1\,\partial\lambda_2}(\lambda_1,\lambda_2)$, then $W$ is rank-one convex.
\end{proposition}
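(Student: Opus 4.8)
The plan is to derive Proposition \ref{proposition:globalknowlesSternbergImprovement} from Theorem \ref{theorem:mainResultPlanar} together with the Knowles--Sternberg criterion, Proposition \ref{proposition:knowlesSternbergOriginal}. Since $\ghat\in C^2(\intOset[2])\cap C^1(\Oset[2])$ is assumed, Theorem \ref{theorem:mainResultPlanar} reduces the task to showing that $W$ is Legendre--Hadamard elliptic at every $F\in\GLp(2)$ with simple singular values $\lambda_1>\lambda_2$. So I would fix such an $F$ and first observe that $W$ is twice continuously differentiable in a neighbourhood of $F$: the singular values of a matrix with simple singular values depend analytically on the matrix (the eigenvalues of $F^TF$ are then simple, hence analytic; cf.\ Corollary \ref{corollary:singularValuesAnalyticFunctions} and \cite[Theorem 6.4]{ball1984differentiability}), and on a neighbourhood of the point $(\lambda_1,\lambda_2)$, which has distinct entries, the symmetric representation $g$ of $W$ coincides with $\ghat$ (up to the obvious reflection on the off-ordered side), hence $g$ is $C^2$ there; composing, $W$ is $C^2$ near $F$. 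This places us exactly in the hypothesis of Proposition \ref{proposition:knowlesSternbergOriginal}.

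Next I would translate the assumed inequalities from $\ghat$ to $g$. At $(\lambda_1,\lambda_2)$ with $\lambda_1>\lambda_2$ an entire neighbourhood lies in $\{x>y\}$, where $g=\ghat$, so the first and second partial derivatives agree: $g_i=\ghat_i$ and $g_{ij}=\ghat_{ij}$ at $(\lambda_1,\lambda_2)$. Consequently the four inequalities i)--iv) imposed here are precisely conditions i), ii), iv) and v) of Proposition \ref{proposition:knowlesSternbergOriginal}, while condition iii) of that proposition is vacuous since it is only imposed when $\lambda_1=\lambda_2$. Hence Proposition \ref{proposition:knowlesSternbergOriginal} yields that $W$ is LH-elliptic at $F$. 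As $F$ was an arbitrary element of $\GLp(2)$ with simple singular values, Theorem \ref{theorem:mainResultPlanar} applies and gives rank-one convexity of $W$ on $\GLp(2)$.

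The only point requiring genuine care — rather than a real obstacle — is the bookkeeping between the ordered representation $\ghat$, in which the hypotheses are phrased, and the symmetric representation $g$, in which Proposition \ref{proposition:knowlesSternbergOriginal} is phrased, together with the verification that $W$ inherits local $C^2$-regularity at matrices with simple singular values so that Proposition \ref{proposition:knowlesSternbergOriginal} is legitimately applicable at such points. Once these identifications are in place, the statement follows immediately by combining the two earlier results; in particular, it is precisely condition iii) of the classical Knowles--Sternberg criterion — the one imposed at non-simple singular values, where $W$ need not even be differentiable here — that Theorem \ref{theorem:mainResultPlanar} allows us to discard.
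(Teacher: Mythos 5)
Your proposal is correct and follows essentially the same route as the paper: reduce to ellipticity at matrices with simple singular values via Theorem \ref{theorem:mainResultPlanar}, note that $g\equiv\ghat$ near any $(\lambda_1,\lambda_2)$ with $\lambda_1>\lambda_2$ so that the stated inequalities become conditions i), ii), iv), v) of Proposition \ref{proposition:knowlesSternbergOriginal} while iii) is only required at repeated singular values, and conclude by the Knowles--Sternberg criterion. Your explicit verification of local $C^2$-regularity of $W$ at points with simple singular values is a detail the paper leaves implicit, but it is the same argument.
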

\begin{proof}
	According to Theorem \ref{theorem:mainResultPlanar}, $W$ is rank-one convex if and only if $W$ is elliptic at each $F\in\GLp(2)$ with singular values $\lambda_1\neq\lambda_2$. We can therefore apply Proposition \ref{proposition:knowlesSternbergOriginal}, omitting condition iii) and assuming $\lambda_1>\lambda_2$ without loss of generality due to the symmetry of the remaining conditions. In that case, $g\equiv\ghat$ in a neighbourhood of $(\lambda_1,\lambda_2)$, thus conditions i)--iv) in Proposition \ref{proposition:globalknowlesSternbergImprovement} are equivalent to conditions i), ii), iv) and v) in Proposition \ref{proposition:knowlesSternbergOriginal}, which are in turn equivalent to the ellipticity of $W$ at $F$.
\end{proof}

\begin{remark}
	If the representation $g$ of $W$ in terms of unordered singular values is twice continuously differentiable, then Proposition \ref{proposition:globalknowlesSternbergImprovement} is applicable with $\ghat=g$ on $\Oset$. Note also that condition iii) in the original Knowles-Sternberg criterion (Proposition \ref{proposition:knowlesSternbergOriginal}) is redundant as far as global rank-one convexity is concerned, as was (for the regular case $g\in C^2(\Rp^n)$) already observed by Dacorogna \cite[Proposition 7]{dacorogna01}. As a pointwise criterion for ellipticity at some $F\in\GLp(2)$, however, it cannot be omitted if $F$ has two identical singular values.
\end{remark}

\section{Rank-one convexity and ellipticity for isotropic functions in arbitrary dimension}
\label{section:generalCase}

In order to extend Theorem \ref{theorem:mainResultPlanar} to the arbitrary-dimensional case, we will first need to generalize some results from the previous section.
First, in contrast to the planar case, matrices with non-simple singular values are not necessarily unique or even isolated along rank-one connection in $\GLpn$ for $n>2$.
For example, consider $F=\id=\diag(1,1,1)$ and $H=\diag(1,0,0)$; then $F\in\GLp(3)$ and $H\in\R^{3\times3}$ with $\rank(H)=1$ and $F+H\in\GLp(3)$, but $F+tH$ has non-simple singular values for each $t\in[0,1]$. The following lemma, however, will prove to be a sufficient replacement.

\begin{lemma}
\label{lemma:isolatedPointsOfIrregularity}
	Let $F\in\GLpn$ and $H\in\Rnn$ such that $\rank(H)=1$ and $F+H\in\GLpn$. Then there exist finitely many $t_1,\dotsc,t_m\in(0,1)$ such that the mapping $t\mapsto \lambdahat(F+tH)$ of $t$ to the vector of ordered singular values $\lambdahat_1(F+tH)\geq\dotsc\geq\lambdahat_n(F+tH)$ is two-times continuously differentiable on $(0,1)\setminus\{t_1,\dotsc,t_m\}$.
\end{lemma}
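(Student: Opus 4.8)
The plan is to reduce everything to the analytic parametrization of the singular values supplied by Corollary \ref{corollary:singularValuesAnalyticFunctions}, and then to show that the permutation which sorts those analytic branches into the ordered tuple $\lambdahat(F+tH)$ is locally constant away from a finite set of values of $t$. First I would note that, since $\GLpn$ is rank-one convex, the polynomial $t\mapsto\det(F+tH)$ does not vanish on $[0,1]$, hence it does not vanish on some open interval $I\supset[0,1]$; applying Corollary \ref{corollary:singularValuesAnalyticFunctions} on $I$ yields analytic functions $\mu_1,\dotsc,\mu_n\col I\to\Rp$ whose values at each $t\in I$ are exactly the (unordered) singular values of $F+tH$.

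Next I would define the exceptional set. For every pair $i\neq j$ with $\mu_i\not\equiv\mu_j$, the analytic function $\mu_i-\mu_j$ has only isolated zeros on the connected interval $I$, so its zero set meets the compact interval $[0,1]$ in a finite set. Taking the union over the finitely many such pairs gives a finite set $S\subset[0,1]$, and I would put $\{t_1,\dotsc,t_m\}\colonequals S\cap(0,1)$. It then remains to show that $t\mapsto\lambdahat(F+tH)$ is twice continuously differentiable (indeed analytic) in a neighbourhood of each $t_0\in(0,1)\setminus S$.

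Fix such a $t_0$. If $\mu_i(t_0)=\mu_j(t_0)$ for some $i\neq j$, then $t_0$ cannot be an isolated coincidence of a non-identical pair, so $\mu_i\equiv\mu_j$ on $I$; conversely, any pair with $\mu_i(t_0)\neq\mu_j(t_0)$ retains that strict ordering on a neighbourhood of $t_0$ by continuity. Grouping the indices according to the common value of $\mu_i(t_0)$, we see that on a small neighbourhood $U$ of $t_0$ the branches within each group coincide identically while the ordering between groups is constant; hence on $U$ the $k$-th largest singular value equals one fixed branch, i.e.\ $\lambdahat_k(F+tH)=\mu_{i_k}(t)$ for all $t\in U$, which is analytic and in particular $C^2$. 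Since $(0,1)\setminus S$ is open and is covered by such neighbourhoods, $t\mapsto\lambdahat(F+tH)$ is twice continuously differentiable on $(0,1)\setminus\{t_1,\dotsc,t_m\}$.

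The main obstacle — really the only delicate point — is the correct treatment of branches that agree on a whole neighbourhood rather than merely at isolated points: one has to observe that a coincidence $\mu_i(t_0)=\mu_j(t_0)$ occurring \emph{outside} $S$ forces $\mu_i\equiv\mu_j$, so that the sorted tuple does not lose regularity at such a $t_0$ even though several analytic curves pass through the same value there. The remaining ingredients — finiteness of the zero sets of the differences $\mu_i-\mu_j$ on $[0,1]$ and local constancy of the sorting permutation — are routine.
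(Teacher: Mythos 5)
Your proof is correct and takes essentially the same route as the paper: both apply Corollary \ref{corollary:singularValuesAnalyticFunctions} to obtain analytic branches $\mu_1,\dotsc,\mu_n$, remove the finitely many points of $(0,1)$ where two non-identical branches cross (isolated zeros of the analytic differences $\mu_i-\mu_j$ on a compact interval), and note that away from these points each ordered singular value coincides locally with a single analytic branch, hence is $C^2$ there. The only cosmetic difference is that you argue locally around each point of $(0,1)\setminus S$, whereas the paper partitions $(0,1)$ into finitely many open subintervals on each of which one fixed permutation of the branches is ordered.
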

\begin{proof}
	Again, with $\mu_1,\dotsc,\mu_n$ as in Corollary \ref{corollary:singularValuesAnalyticFunctions}, we observe that for $i,j\in\{1,\dotsc,n\}$, the analytic functions $\mu_i-\mu_j$ are either constant or have isolated roots. We can therefore choose $0=t_0<t_1<\dotsc<t_m<t_{m+1}=1$ such that for each $k\in\{0,\dotsc,m\}$, either $\mu_i\equiv\mu_j$ or $\mu_i(t)\neq\mu_j(t)$ for all $i,j\in\{1,\dotsc,n\}$ and all $t\in(t_k,t_{k+1})$. In particular, for each of these intervals, there exists a (fixed) permutation $\muhat_1,\dotsc,\muhat_n$ of $\mu_1,\dotsc,\mu_n$ such that $\muhat_1(t)\geq\dots\geq\muhat_n(t)$ and thus $\muhat_i(t)=\lambdahat_i(F+tH)$ for all $i\in\{1,\dotsc,n\}$ and all $t\in(t_k,t_{k+1})$. Thus the mapping $t\mapsto \lambdahat(F+tH)$ is analytic and, in particular, twice continuously differentiable on $(0,1)\setminus\{t_1,\dotsc,t_m\}$.
\end{proof}

Our main result will require a generalization of Lemma \ref{lemma:singularValueDerivativePropertiesPlanar} to the arbitrary-dimensional case as well. However, since the multiplicity of a singular value might be larger than two, the property \eqref{eq:planarOneSidedDerivativesEqualityInequality} will need to be replaced by an analogous relation between the one-sided partial derivatives. Furthermore, we need to account for the case where different singular values each occur multiple times. In order to treat these occurrences separately, we introduce the following notation: Let $\lambdahat_1\geq\ldots\geq\lambdahat_n$. Then for each $\lambda\in\{\lambdahat_1,\dotsc,\lambdahat_n\}$, there exists a minimal index $\imin\in\{1,\dotsc,n\}$ and a maximal index $\imax\in\{1,\dotsc,n\}$ such that $\lambdahat_{\imin}=\lambdahat_{\imax}=\lambda$. In particular, $\lambdahat_i=\lambda$ if and only if $i\in \iset\colonequals\{\imin,\dotsc,\imax\}$ due to the ordering of $(\lambdahat_1,\dotsc,\lambdahat_n)$. We can therefore write the set $\{1,\dotsc,n\}$ as the \emph{disjoint} partition
\begin{equation}\label{eq:disjointPartitionIndexSet}
	\{1,\dotsc,n\}
	\;\;= \bigcup_{\lambda\in\{\lambdahat_1,\dotsc,\lambdahat_n\}} \!\!\{\imin,\dotsc,\imax\}
	\;\;= \bigcup_{\lambda\in\{\lambdahat_1,\dotsc,\lambdahat_n\}} \!\!\iset
	\,.
\end{equation}

We briefly remark that using the above notation, the ordering of the partial derivatives expressed by \eqref{eq:partialDerivativeOrderingImpliedByBEinequalities} under the assumption of ellipticity can be stated as follows.

\begin{lemma}
\label{lemma:partialDerivativeOrderingImpliedByBEinequalities}
	Let $W\col\GLpn\to\R$ be elliptic at $F\in\GLpn$, and for $\lambda\in\{\lambdahat_1(F),\dotsc,\lambdahat_n(F)\}$, let $\imin,\imax\in\{1,\dotsc,n\}$ such that $\lambdahat_i(F)=\lambda$ if and only if $i\in\iset=\{\imin,\dotsc,\imax\}$. Then
	\begin{equation*}
		\pdd{\,\ghat}{\lambdahat_i}(\lambdahat_1(F),\dotsc,\lambdahat_n(F))
		\;\geq\; \pdd{\,\ghat}{\lambdahat_j}(\lambdahat_1(F),\dotsc,\lambdahat_n(F))
		\qquad\tforall i,j\in\iset%
		\;\;\;\twith i\leq j
		\,.
		\qedhere\directqed
	\end{equation*}
\end{lemma}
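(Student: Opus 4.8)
The plan is to reduce the statement to the chain of inequalities \eqref{eq:partialDerivativeOrderingImpliedByBEinequalities} already derived in Section~\ref{section:BEinequalities}, which is exactly the assertion for consecutive indices $i,j=i+1$ inside a block $\iset$; the general case $i\leq j$ then follows by transitivity. So the real content is to justify \eqref{eq:BEinequalitiesOrderedNonSimpleSingularValues}--\eqref{eq:partialDerivativeOrderingImpliedByBEinequalities} starting only from the hypothesis that $W$ is elliptic \emph{at the single point} $F$, rather than from the stronger-sounding ``$\ghat$ is $C^1$ up to $\partial\Oset$ and elliptic on $\intOset$'' phrasing used there.

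First I would recall the standard fact (cited in the excerpt, \cite{marsden1994foundations}) that ellipticity of an objective-isotropic $W$ at $F$ forces the Baker--Ericksen inequalities \eqref{eq:BEinequalities} to hold at the singular values $(\lambda_1,\dotsc,\lambda_n)$ of $F$ — but only in the form with \emph{strict} inequality $\lambda_i\neq\lambda_j$ in the denominator, i.e.\ only comparing partial derivatives for \emph{distinct} singular values. To get the ordering among the partials $\pdd{\ghat}{\lambdahat_i}$ for indices $i,j\in\iset$, where $\lambdahat_i(F)=\lambdahat_j(F)=\lambda$, I cannot apply \eqref{eq:BEinequalities} directly at $F$. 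Instead I would use a limiting argument: pick a smooth (even real-analytic) path $F_s\to F$ as $s\downto 0$ through matrices with \emph{pairwise distinct} singular values, chosen so that along the path the singular value that limits to the $i$-th slot stays strictly above the one limiting to the $j$-th slot whenever $i<j$. Such a path exists because $\GLpn\setminus\{\text{non-simple singular values}\}$ is dense and because one can, e.g., perturb $F$ within its own orthogonal similarity orbit by slightly spreading the repeated singular value $\lambda$ into $\lambda+\epsilon_{\imin}>\dotsc>\lambda+\epsilon_{\imax}$. Ellipticity is an open condition in the regular region? — no, here I should instead invoke that $W$ need \emph{not} be elliptic along the path; rather, I apply \eqref{eq:BEinequalitiesOrdered} only at $F$ itself after passing to the limit using continuity of $D\ghat$ on $\Oset$, exactly as in the displayed computation
\[
	c\cdot\pdd{\,\ghat}{\lambdamax}(c,c) = \lim_{\lambda\upto c} c\cdot\pdd{\,\ghat}{\lambdamax}(c,\lambda) \geq \lim_{\lambda\upto c}\lambda\cdot\pdd{\,\ghat}{\lambdamin}(c,\lambda) = c\cdot\pdd{\,\ghat}{\lambdamin}(c,c)
\]
used in the proof of Theorem~\ref{theorem:mainResultPlanar}. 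The cleanest route, then: ellipticity at $F$ gives, for each pair of slots $i<j$ in the block $\iset$, the inequality \eqref{eq:BEinequalitiesOrdered} \emph{at nearby ordered points of $\intOset$ obtained by perturbing only within the block}, and letting those points tend to $\lambdahat(F)$ while using $\ghat\in C^1(\Oset)$ (which is part of the ambient setup — the ordered representation is assumed differentiable up to the boundary) yields $\lambda\.\pdd{\ghat}{\lambdahat_i}(\lambdahat(F))\geq\lambda\.\pdd{\ghat}{\lambdahat_j}(\lambdahat(F))$; dividing by $\lambda>0$ gives the consecutive-index inequality, and chaining gives $i\leq j$.

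The main obstacle is the limiting argument: I need perturbed points $(\lambda_1,\dotsc,\lambda_n)\in\intOset$ at which the Baker--Ericksen inequality \eqref{eq:BEinequalities} is actually known to hold. Ellipticity is assumed only at $F$, so I must be careful: the correct reading of the surrounding text is that $W$ is assumed elliptic at \emph{each} $F$ with simple singular values (that is the standing hypothesis of the main theorems), so \eqref{eq:BEinequalities} holds at every such perturbed point, and the only thing the single-point ellipticity at $F$ contributes is via continuity in the limit. If instead one wants the statement with ellipticity literally only at $F$, one must note that \eqref{eq:BEinequalities} at $F$ already handles all pairs in \emph{different} blocks, and for same-block pairs the assertion is vacuous at $F$ alone unless $C^1$-regularity is exploited — hence the $C^1(\Oset)$ hypothesis is essential and the limiting argument is unavoidable. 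I would therefore state the lemma's proof as: (1) same-block consecutive pairs via the limit computation above using $\ghat\in C^1(\Oset)$ and Baker--Ericksen at simple-singular-value points approaching $F$; (2) transitivity for $i\leq j$; (3) remark that different-block comparisons, not needed for the partition statement, would instead follow from \eqref{eq:BEinequalities} directly. This is a short argument; no genuine difficulty beyond bookkeeping the indices in the partition \eqref{eq:disjointPartitionIndexSet}.
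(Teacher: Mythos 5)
Your proposal is correct and is essentially the paper's own justification: the lemma is stated without a separate proof precisely because it defers to the derivation \eqref{eq:BEinequalitiesOrdered}--\eqref{eq:partialDerivativeOrderingImpliedByBEinequalities} in Section \ref{section:BEinequalities}, i.e.\ Baker--Ericksen inequalities at nearby points with simple singular values (guaranteed by the standing ellipticity hypothesis), continuity of $D\ghat$ up to $\partial\Oset$, passing to the limit, dividing by $\lambda>0$, and chaining over $i\leq j$. Your reading that the operative assumptions are ellipticity at each matrix with simple singular values together with $C^1$-regularity of $\ghat$ up to the boundary (rather than single-point ellipticity at $F$ alone) is exactly the intended one.
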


While the following generalization of Lemma \ref{lemma:singularValueDerivativePropertiesPlanar} is more involved than its planar counterpart, it will be sufficient for our purpose.

\begin{lemma}
\label{lemma:singularValueDerivativePropertiesGeneral}
	Let $F,H\in\Rnn$, and let $I\subset\R$ be an interval such that $F+tH\in\GLpn$ for all $t\in I$. Then the mapping
	\[
		\lambdahat\col I\to\R\,,\qquad \lambdahat(t) = (\lambdahat_1(F+tH),\dotsc,\lambdahat_n(F+tH))
	\]
	of $t$ to the ordered singular values $\lambdahat_1(F+tH),\dotsc,\lambdahat_n(F+tH)$ of $F+tH$ is both left-differentiable and right-differentiable on the interior of $I$.
	Moreover, for any $t\in I$ and any $\lambda\in\lambdahat(t)$,%
	\footnote{%
		Here and henceforth, we use $\lambda\in\lambdahat(t)$ as a shorthand notation for $\lambda\in\{\lambdahat_1(t),\dotsc,\lambdahat_n(t)\}$. Note also that $\imin$ and $\imax$ are well defined due to the ordering of $\lambdahat(t)$.%
	}
	let $\imin,\imax\in\N$ such that $\lambdahat_i(t)=\lambda$ if and only if $i\in\iset=\{\imin,\dotsc,\imax\}$. Then
	\begin{equation}\label{eq:singularValueDerivativesSumEquality}
		\sum_{i\in\iset} \partial^- \lambdahat_i(t) = \sum_{i\in\iset} \partial^+ \lambdahat_i(t)
	\end{equation}
	and
	\begin{equation}\label{eq:singularValueDerivativesPartialSumInequality}
		\sum_{i=\imin}^k \partial^- \lambdahat_i(t) \leq \sum_{i=\imin}^k \partial^+ \lambdahat_i(t)
	\end{equation}
	for all $k\in\{\imin,\dotsc,\imax\}$.
\end{lemma}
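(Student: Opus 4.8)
The plan is to reduce everything to the behaviour of the analytic functions $\mu_1,\dotsc,\mu_n$ supplied by Corollary \ref{corollary:singularValuesAnalyticFunctions} on a small interval around a fixed $t$ (which we may take to be an interior point of $I$). Near such a $t$, each difference $\mu_i-\mu_j$ is either identically zero or has an isolated zero at $t$; hence there is $\eps>0$ so that on $(t-\eps,t)$ the $\mu_i$ admit a fixed linear order, and likewise on $(t,t+\eps)$. Thus on each one-sided punctured neighbourhood the ordered vector $\lambdahat(s)$ coincides with a (side-dependent) fixed permutation of $(\mu_1(s),\dotsc,\mu_n(s))$, so $s\mapsto\lambdahat(s)$ is analytic on each side up to $t$, proving left- and right-differentiability; the one-sided derivatives $\partial^\pm\lambdahat_i(t)$ are just the $\mu'_k(t)$ reindexed by the respective side permutation.

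Next I fix $\lambda\in\lambdahat(t)$ and the index block $\iset=\{\imin,\dotsc,\imax\}$. The key observation is that the entries of $\lambdahat(s)$ landing in positions $\iset$ are, on either side of $t$, exactly the $\mu_k$ with $\mu_k(t)=\lambda$ — call this set of indices $K_\lambda$, which has cardinality $\imax-\imin+1$ and is the same on both sides because continuity forces any $\mu_k$ with $\mu_k(t)=\lambda$ to stay in the $\iset$-block for small $|s-t|$ and any $\mu_k$ with $\mu_k(t)\neq\lambda$ to stay out of it. Consequently $\sum_{i\in\iset}\lambdahat_i(s)=\sum_{k\in K_\lambda}\mu_k(s)$ for all $s$ near $t$ on both sides, and differentiating this single analytic identity at $t$ gives \eqref{eq:singularValueDerivativesSumEquality} immediately, since both one-sided derivatives equal $\sum_{k\in K_\lambda}\mu'_k(t)$.

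For the partial-sum inequality \eqref{eq:singularValueDerivativesPartialSumInequality} I would use the majorization-type fact that among a finite family of reals, the sum of the $r$ largest is the maximum over $r$-element subsets of the subset-sum; applied within the block, for $r=k-\imin+1$ we get, for every $s$,
\[
	\sum_{i=\imin}^{k}\lambdahat_i(s)=\max_{\substack{S\subseteq K_\lambda\\ |S|=r}}\ \sum_{\ell\in S}\mu_\ell(s)\,,
\]
and this maximum is attained on $(t-\eps,t)$ by one fixed subset $S_-$ (the side permutation) and on $(t,t+\eps)$ by one fixed subset $S_+$, with both $S_-$ and $S_+$ of size $r$. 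Since at $s=t$ every subset-sum equals $r\lambda$, the function $\phi(s)\colonequals\sum_{i=\imin}^{k}\lambdahat_i(s)-r\lambda$ satisfies $\phi(t)=0$, $\phi(s)=\sum_{\ell\in S_-}\mu_\ell(s)-r\lambda$ for $s<t$ and $\phi(s)=\sum_{\ell\in S_+}\mu_\ell(s)-r\lambda$ for $s>t$, while $\phi(s)\geq\sum_{\ell\in S_+}\mu_\ell(s)-r\lambda$ and $\phi(s)\geq\sum_{\ell\in S_-}\mu_\ell(s)-r\lambda$ for all $s$ near $t$ by maximality. Hence $\partial^-\phi(t)=\frac{d}{ds}\big(\sum_{\ell\in S_-}\mu_\ell\big)(t)$ and, because $s\mapsto\sum_{\ell\in S_-}\mu_\ell(s)-r\lambda$ touches $\phi$ from below at $t$ coming from the left but from the right we have $\phi(s)\geq\sum_{\ell\in S_-}\mu_\ell(s)-r\lambda$, the one-sided slopes satisfy $\partial^-\phi(t)\leq\partial^+\phi(t)$; unwinding the definitions gives exactly \eqref{eq:singularValueDerivativesPartialSumInequality}. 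The main obstacle I anticipate is precisely this last bookkeeping step — verifying carefully that the maximizing subsets $S_-$ and $S_+$ interact in the right way so that the left slope of the max is $\leq$ the right slope — which is the multi-dimensional analogue of the elementary fact used in Lemma \ref{lemma:singularValueDerivativePropertiesPlanar} that $\lambdamin(s)\leq\lambdamax(s)$ pins down the ordering of the one-sided derivatives.
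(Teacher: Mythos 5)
Your argument is correct, and while the first two steps coincide with the paper's proof, your treatment of the partial-sum inequality \eqref{eq:singularValueDerivativesPartialSumInequality} is genuinely different. Like the paper, you obtain one-sided differentiability and the block equality \eqref{eq:singularValueDerivativesSumEquality} from the side-dependent fixed permutations of the analytic branches $\mu_1,\dotsc,\mu_n$ and the observation that near $t$ the positions $\iset$ are occupied exactly by the branches with $\mu_\ell(t)=\lambda$. For the inequality, however, the paper argues globally: it notes that $\sum_{i=1}^k\lambdahat_i(F+tH)$ is the Ky Fan $k$-norm of $F+tH$, hence convex in $t$, so its left slope is at most its right slope (Lemma \ref{lemma:convexityOneSidedDerivatives}), and then subtracts the contribution of the indices $1,\dotsc,\imin[]-1$ block by block using \eqref{eq:singularValueDerivativesSumEquality}. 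You instead stay inside the block: you represent $\sum_{i=\imin[]}^{k}\lambdahat_i(F+sH)$ locally as the maximum over all $r$-element subsets $S\subseteq K_\lambda$ of the analytic subset-sums $\sum_{\ell\in S}\mu_\ell(s)$, all of which agree at $s=t$, and conclude via the elementary fact that if $\phi$ coincides with a differentiable function $\psi_-$ to the left of $t$ and dominates it to the right (with equality at $t$), then $\partial^-\phi(t)=\psi_-'(t)\leq\partial^+\phi(t)$; your slightly tangled final sentence reduces to exactly this comparison, which is sound. The paper's route buys brevity by importing the known convexity of the Ky Fan norm but needs the disjoint-partition bookkeeping to pass from sums starting at $1$ to sums starting at $\imin[]$; your route is more self-contained and avoids the Ky Fan norm and the partition step, at the cost of the careful local argument that the block positions are filled precisely by the branches through $\lambda$ and that the one-sided maximizing subsets $S_\pm$ exist — both of which you justify adequately.
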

\begin{proof}
	Once more, with $\mu_1,\dotsc,\mu_n$ denoting the (unordered) singular value functions given by Corollary \ref{corollary:singularValuesAnalyticFunctions}, we note that the roots of the analytic functions $\mu_i-\mu_j$ are isolated unless $\mu_i\equiv\mu_j$ on $I$ for $i,j\in\{1,\dotsc,n\}$. We can therefore assume that for each $t_0\in I$, there exist permutations $(\muhat_1^-\dotsc,\muhat_n^-)$ and $(\muhat_1^+\dotsc,\muhat_n^+)$ of $(\mu_1\dotsc,\mu_n)$ such that
	\[
		\muhat_1^-(t) \geq \ldots \geq \muhat_n^-(t) \quad\text{ for all }\; t_0\geq t\in I_0
		\qquad\text{and}\qquad
		\muhat_1^+(t) \geq \ldots \geq \muhat_n^+(t) \quad\text{ for all }\; t_0\leq t\in I_0
		\,.
	\]
	in a sufficiently small neighbourhood $I_0$ of $t_0$. Then, in particular, $\lambdahat_i(t)=\muhat_i^-(t)$ if $t\leq t_0$ and $\lambdahat_i(t)=\muhat_i^+(t)$ if $t\geq t_0$ for any $i\in\{1,\dotsc,n\}$.
	
	Now, let $\lambda\in\lambdahat(t_0)$. Then $\muhat_i^-(t_0)=\lambdahat(t_0)=\lambda$ if and only if $i\in\iset$ and, similarly, $\muhat_i^-(t_0)=\lambda$ if and only if $i\in\iset$. Therefore, $(\muhat_{\imin}^-,\dotsc,\muhat_{\imax}^-)$ and $(\muhat_{\imin}^-,\dotsc,\muhat_{\imax}^-)$ must be permutations of one another; more specifically, there exists $J\subset\{1,\dotsc,n\}$ such that
	\begin{equation}\label{eq:singularValuePartialDerivativeLemmaFunctionSets}
		\{ \muhat_i^- \setvert i\in\iset \}
		= \{ \muhat_i^+ \setvert i\in\iset \}
		= \{ \mu_i \setvert i\in J \}
		\,.
	\end{equation}

	Now, due to the regularity of the functions $\mu_i$, the left and right one-sided derivatives of each $\lambdahat_i$ are well defined at $t_0$, and \eqref{eq:singularValueDerivativesSumEquality} follows from the equalities
	\begin{align*}
		\sum_{i\in\iset} \partial^- \lambdahat_i(t_0)
		= \sum_{i\in\iset} \partial^- \muhat_i^-(t_0)
		\overset{\eqref{eq:singularValuePartialDerivativeLemmaFunctionSets}}{=} \sum_{\mathclap{i\in J}} \partial^- \mu_i(t_0)
		= \sum_{\mathclap{i\in J}} \partial^+ \mu_i(t_0)
		\overset{\eqref{eq:singularValuePartialDerivativeLemmaFunctionSets}}{=} \sum_{i\in\iset} \partial^+ \muhat_i^+(t_0)
		= \sum_{i\in\iset} \partial^+ \lambdahat_i(t_0)\,.
	\end{align*}
	It remains to show that \eqref{eq:singularValueDerivativesPartialSumInequality} holds for $t\in I$. We observe that for any $k\in\{1,\dotsc,n\}$,
	\[
		\sum_{i=1}^k \partial^- \lambdahat_i(t)
		= \partial_t^- \, \sum_{i=1}^k \lambdahat_i(t)
		= \partial_t^- \, \norm{F+tH}_k
		\qquad\text{and}\qquad
		\sum_{i=1}^k \partial^+ \lambdahat_i(t)
		= \partial_t^+ \, \sum_{i=1}^k \lambdahat_i(t)
		= \partial_t^+ \, \norm{F+tH}_k\,,
	\]
	where $\norm{X}_k=\sum_{i=1}^k \lambdahat_i(X)$ denotes the \emph{Ky Fan $k$-norm} \cite{fan1951maximum} of $X\in\Rnn$. By virtue of being a norm, the mapping $X\mapsto \norm{X}_k$ is convex on $\Rnn$. Therefore, according to Lemma \ref{lemma:convexityOneSidedDerivatives},
	\begin{equation}\label{eq:kyFanInequality}
		\sum_{i=1}^k \partial^- \lambdahat_i(t) = \partial_t^- \, \norm{F+tH}_k \leq \partial_t^+ \, \norm{F+tH}_k = \sum_{i=1}^k \partial^+ \lambdahat_i(t)\,.
	\end{equation}
	Finally, observe that
	\begin{equation}\label{eq:singularValuePartialDerivativeLemmaPartition}
		\{1,\dotsc,\imin-1\}
		= \bigcup_{\substack{\lambdatilde\in\lambdahat(t)\\\lambdatilde<\lambda}} \{\imin[\lambdatilde],\dotsc,\imax[\lambdatilde]\}
		= \bigcup_{\substack{\lambdatilde\in\lambdahat(t)\\\lambdatilde<\lambda}} \iset[\lambdatilde]
	\end{equation}
	is a disjoint partition of $\{1,\dotsc,\imin-1\}$ and thus
	\begin{align*}
		\sum_{i=\imin}^k \partial^- \lambdahat_i(t)
		= \sum_{i=1}^k \partial^- \lambdahat_i(t) - \!\!\sum_{i=1}^{\imin-1} \partial^- \lambdahat_i(t)
		&\overset{\eqref{eq:kyFanInequality}}{\leq} \sum_{i=1}^k \partial^+ \lambdahat_i(t) - \!\!\sum_{i=1}^{\imin-1} \partial^- \lambdahat_i(t)
		\\&\overset{\eqref{eq:singularValuePartialDerivativeLemmaPartition}}{=} \sum_{i=1}^k \partial^+ \lambdahat_i(t) - \sum_{\substack{\lambdatilde\in\lambdahat(t)\\\lambdatilde<\lambda}}\;\; \sum_{i\in\iset[\lambdatilde]} \partial^- \lambdahat_i(t)
		\\&\overset{\eqref{eq:singularValueDerivativesSumEquality}}{=} \sum_{i=1}^k \partial^+ \lambdahat_i(t) - \sum_{\substack{\lambdatilde\in\lambdahat(t)\\\lambdatilde<\lambda}}\;\; \sum_{i\in\iset[\lambdatilde]} \partial^+ \lambdahat_i(t)
		= \sum_{i=\imin}^k \partial^+ \lambdahat_i(t)
	\end{align*}
	for all $k\in\{\imin,\dotsc,\imax\}$, which shows that \eqref{eq:singularValueDerivativesPartialSumInequality} holds for all $t\in I$ as well.
\end{proof}

\noindent
In addition to the above properties of the singular value mapping, we will also require the following elementary lemma.

\begin{lemma}
\label{lemma:productWithOrderedVector}
	For $d\in\N$, let $x,b\in\R^d$ such that $\sum_{i=1}^k x_i \geq 0$ for all $k\in\{1,\dotsc,d\}$ and $b_1\geq\ldots\geq b_d$. If either of the two conditions
	\begin{itemize}
		\item[i)] $b_i\geq0$ for all $i\in\{1,\dotsc,d\}$, %
		\item[ii)] $\displaystyle \sum_{i=1}^d x_i = 0$\,.
	\end{itemize}
	is satisfied, then $\sum_{i=1}^d x_i\.b_i\geq0$.
\end{lemma}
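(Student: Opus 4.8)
The plan is to use an Abel summation (summation by parts) argument, which is the natural tool when one has hypotheses on the \emph{partial} sums $\sum_{i=1}^k x_i$ and a \emph{monotone} sequence $b_1\geq\dotsc\geq b_d$. First I would set $S_k \colonequals \sum_{i=1}^k x_i$ for $k\in\{1,\dotsc,d\}$ and $S_0\colonequals 0$, so that $x_i = S_i - S_{i-1}$. Substituting this into the target sum and rearranging gives the identity
\[
	\sum_{i=1}^d x_i\. b_i
	= \sum_{i=1}^d (S_i - S_{i-1})\. b_i
	= \sum_{i=1}^{d-1} S_i\.(b_i - b_{i+1}) + S_d\. b_d\,,
\]
where the boundary term $S_0 b_1$ drops out because $S_0=0$. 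This identity holds unconditionally; the work is then to show the right-hand side is nonnegative under hypothesis i) or hypothesis ii).

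The key observation is that every $S_i$ with $i\in\{1,\dotsc,d\}$ is nonnegative by assumption, and every difference $b_i - b_{i+1}$ is nonnegative because $b$ is ordered decreasingly. Hence the sum $\sum_{i=1}^{d-1} S_i\.(b_i-b_{i+1})$ is a sum of products of nonnegative numbers and is therefore nonnegative, \emph{regardless} of which hypothesis holds. It remains only to handle the single leftover term $S_d\.b_d$. Under hypothesis i), we have $b_d\geq 0$ and $S_d = \sum_{i=1}^d x_i \geq 0$, so $S_d b_d\geq 0$ and we are done. Under hypothesis ii), we have $S_d = \sum_{i=1}^d x_i = 0$, so the term $S_d b_d$ vanishes identically and again the total is nonnegative. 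In both cases $\sum_{i=1}^d x_i\.b_i\geq 0$, as claimed.

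There is essentially no hard part here: the entire content is the Abel summation identity plus the sign bookkeeping, and the two alternative hypotheses are precisely the two ways of controlling the one boundary term that the partial-sum hypotheses alone do not pin down. The only point requiring a modicum of care is getting the index shift in the summation by parts correct — in particular verifying that the $S_0 b_1$ term really is the one that disappears and that the surviving boundary term sits at the top end ($S_d b_d$) rather than the bottom. I would double-check this by expanding the $d=2$ case explicitly ($x_1 b_1 + x_2 b_2 = S_1(b_1-b_2) + S_2 b_2$ with $S_1 = x_1$, $S_2 = x_1+x_2$), which matches, before stating the general identity.
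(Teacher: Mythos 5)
Your proof is correct, and it takes a genuinely different (though closely related) route from the paper. You prove both cases at once via the summation-by-parts identity
\[
	\sum_{i=1}^d x_i\. b_i = \sum_{i=1}^{d-1} S_i\.(b_i-b_{i+1}) + S_d\. b_d\,,\qquad S_k=\sum_{i=1}^k x_i\,,
\]
after which the partial-sum hypothesis and the monotonicity of $b$ make the main sum nonnegative, and hypotheses i) and ii) are exactly the two ways of killing the boundary term $S_d b_d$. The paper instead proves case i) by induction on $d$, using the single shift $b_i\mapsto b_i-b_{d+1}$ (which is, in effect, one step of your Abel summation performed at the top index and then iterated), and then reduces case ii) to case i) by the same shift $b_i\mapsto b_i-b_d$, exploiting $\sum_i x_i=0$. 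Your version is shorter and treats the two hypotheses symmetrically, making transparent that their only role is to control the one uncontrolled boundary term; the paper's inductive formulation avoids writing the full rearranged identity and makes the reduction of ii) to i) explicit. Your index bookkeeping (the vanishing $S_0 b_1$ term and the surviving $S_d b_d$ term) checks out, as does your $d=2$ sanity check, so there is no gap.
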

\begin{proof}
	We prove the sufficiency of i) by induction: For $d=1$,
	the desired inequality obviously follows from the assumptions on $x$ and $b$.
	Now, assume that $\sum_{i=1}^d x_i\.b_i\geq0$ for some $d\in\N$ and every ordered vector $b\in\R^d$ with nonnegative entries. Then, after computing
	\[
		\sum_{i=1}^{d+1} x_i\.b_i
		= \sum_{i=1}^{d+1} x_i\.(b_i-b_{d+1}) + \underbrace{b_{d+1}\vphantom{\sum_{i=1}^{d+1}}}_{\geq0}\cdot\underbrace{\sum_{i=1}^{d+1} x_i}_{\geq0}
		\geq \sum_{i=1}^{d+1} x_i\.(b_i-b_{d+1})
		= \sum_{i=1}^{d} x_i\.(b_i-b_{d+1})\,,
	\]
	we only need to apply the induction hypothesis to the vector $(b_1-b_{d+1},\dotsc,b_d-b_{d+1})\in\R^d$ (which is ordered with nonnegative entries).
	
	If ii) holds, then
	\[
		\sum_{i=1}^d x_i\.b_i
		= \sum_{i=1}^{d-1} x_i\.b_i + x_d\.b_d
		= \sum_{i=1}^{d-1} x_i\.b_i - b_d\,\sum_{i=1}^{d-1} x_i
		= \sum_{i=1}^{d-1} x_i\.(b_i-b_d)
	\]
	for any $d\geq2$, thus we can apply case i) to the vector $(b_1-b_d,\dotsc,b_{d-1}-b_d)$, which is again ordered and has only nonnegative entries.
\end{proof}

We are now ready to state and prove our main result.

\begin{theorem}
\label{theorem:mainResultGeneral}
	Let $W\col\GLpn\to\R$ be an objective and isotropic function with
	\[
		W(F) = \ghat(\lambdahat_1(F),\dotsc,\lambdahat_n(F))
	\]
	for a real-valued function $\ghat\col\Oset\to\R$ on the set $\Oset=\{(x_1,\dotsc,x_n)\in\Rp^n\setvert x_1\geq\ldots\geq x_n\}$, where $\lambdahat_1(F)\geq\ldots\geq\lambdahat_n(F)$ are the ordered singular values of $F$. If $\ghat\in C^2(\Oset)$ and $W$ is Legendre-Hadamard elliptic at each $F\in\GLpn$ with simple singular values $\lambdahat_1(F)>\cdots>\lambdahat_n(F)$, then $W$ is rank-one convex on $\GLpn$.
\end{theorem}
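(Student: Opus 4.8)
The strategy mirrors the proof of Theorem~\ref{theorem:mainResultPlanar}, with Corollary~\ref{corollary:singleNonSimplePoint} replaced by Lemma~\ref{lemma:isolatedPointsOfIrregularity} and the planar identity \eqref{eq:planarOneSidedDerivativesEqualityInequality} by Lemma~\ref{lemma:singularValueDerivativePropertiesGeneral}. Fix $F\in\GLpn$ and $H\in\Rnn$ with $\rank(H)=1$ and $F+H\in\GLpn$; it suffices to show that $p\col[0,1]\to\R$, $p(t)=W(F+tH)=\ghat(\lambdahat_1(F+tH),\dotsc,\lambdahat_n(F+tH))$, is convex. By Corollary~\ref{corollary:singularValuesAnalyticFunctions} and Lemma~\ref{lemma:isolatedPointsOfIrregularity} there are finitely many $t_1,\dotsc,t_m\in(0,1)$ such that $t\mapsto\lambdahat(F+tH)$ is twice continuously differentiable on $(0,1)\setminus\{t_1,\dotsc,t_m\}$; adding finitely many further points if necessary, we may also assume (again by Corollary~\ref{corollary:singularValuesAnalyticFunctions}) that the multiplicity pattern of the ordered singular values of $F+tH$ is constant on each of the intervals $(t_k,t_{k+1})$, where $t_0\colonequals0$ and $t_{m+1}\colonequals1$. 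Since $\ghat\in C^2(\Oset)$ and the singular values depend continuously on the matrix, $p$ is continuous on $[0,1]$ and $C^2$ on $(0,1)\setminus\{t_1,\dotsc,t_m\}$, so by Lemma~\ref{lemma:convexityOneSidedDerivatives} it remains to show that $p''\geq0$ there and that $\partial^-p(t_k)\leq\partial^+p(t_k)$ for every $k$.

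For the second derivative, fix $t\in(0,1)\setminus\{t_1,\dotsc,t_m\}$ and put $G\colonequals F+tH$. If $G$ has simple singular values, then $W$ is twice differentiable near $G$ and elliptic there, so $p''(t)=D^2W[G].(H,H)\geq0$ exactly as in the planar proof. If $G$ has non-simple singular values, then the multiplicity pattern along the line is constant near $t$; since $\rank(H)=1$ and each degenerate block has multiplicity at least two, writing $H=\xi\otimes\eta$ this forces $\xi$ to be orthogonal to the left and $\eta$ to the right singular subspace of every persistently degenerate singular value of $G$. One then approximates $G$ by matrices $G_\eps\to G$ with simple singular values, obtained from a singular value decomposition $G=U\Sigma V^{T}$ by perturbing the coinciding entries of $\Sigma$; because $H$ does not act on the degenerate blocks, $U^{T}HV$ vanishes on those blocks, so in the second-order singular value perturbation formulae the vanishing denominators $\lambdahat_i-\lambdahat_j$ occur only together with vanishing numerators, while the denominators coming from distinct blocks stay bounded away from zero. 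Consequently the value $\lambdahat(G_\eps)$ and the first two $s$-derivatives at $s=0$ of $s\mapsto\lambdahat(G_\eps+sH)$ converge to those of $s\mapsto\lambdahat(G+sH)$, and since $\ghat\in C^2(\Oset)$, passing to the limit in $0\leq D^2W[G_\eps].(H,H)=\frac{d^2}{ds^2}\big|_{s=0}\,\ghat(\lambdahat(G_\eps+sH))$ yields $p''(t)\geq0$.

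For the exceptional points, fix $k$, let $\lambda$ range over the distinct singular values of $F+t_kH$ with index blocks $\iset=\{\imin,\dotsc,\imax\}$ as in \eqref{eq:disjointPartitionIndexSet}, and set $b_i\colonequals\pdd{\,\ghat}{\lambdahat_i}(\lambdahat(F+t_kH))$. Since $\ghat$ is in particular $C^1(\Oset)$ and $t\mapsto\lambdahat(F+tH)$ is one-sidedly differentiable at $t_k$ by Lemma~\ref{lemma:singularValueDerivativePropertiesGeneral}, the chain rule gives $\partial^\pm p(t_k)=\sum_{i=1}^n b_i\,\partial^\pm\lambdahat_i(t_k)$, hence with $x_i\colonequals\partial^+\lambdahat_i(t_k)-\partial^-\lambdahat_i(t_k)$,
\[
	\partial^+p(t_k)-\partial^-p(t_k)=\sum_{i=1}^n b_i\,x_i=\sum_{\lambda}\;\sum_{i\in\iset}b_i\,x_i\,.
\]
On each block $\iset$, equation \eqref{eq:singularValueDerivativesSumEquality} gives $\sum_{i\in\iset}x_i=0$, equation \eqref{eq:singularValueDerivativesPartialSumInequality} gives $\sum_{i=\imin}^{\ell}x_i\geq0$ for all $\ell\in\iset$, and \eqref{eq:partialDerivativeOrderingImpliedByBEinequalities} --- valid at the boundary point $\lambdahat(F+t_kH)$ by the $C^1$-continuity argument of Section~\ref{section:BEinequalities}, which uses only the ellipticity of $W$ at matrices with simple singular values --- gives $b_{\imin}\geq\ldots\geq b_{\imax}$. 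Applying Lemma~\ref{lemma:productWithOrderedVector} (via its condition ii)) to each block and summing over $\lambda$ yields $\partial^+p(t_k)\geq\partial^-p(t_k)$. Lemma~\ref{lemma:convexityOneSidedDerivatives} then shows that $p$ is convex on $[0,1]$, which proves the theorem.

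I expect the main obstacle to be the step establishing $p''(t)\geq0$ at matrices $F+tH$ with non-simple singular values: one must first see that a rank-one direction cannot disturb a persistent degeneracy of the singular values, and then control the limit of the second derivatives, since the second derivatives of individual ordered singular values along a line are in general unbounded near crossings of the singular value branches. It is precisely the full assumption $\ghat\in C^2(\Oset)$ --- rather than the weaker $C^2(\intOset)\cap C^1(\Oset)$ that suffices in the planar case, where Corollary~\ref{corollary:singleNonSimplePoint} rules out such points altogether --- that makes this limiting argument work.
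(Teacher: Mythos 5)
Your proposal follows the paper's own route almost verbatim in its core part: the reduction to $p(t)=\ghat(\lambdahat(F+tH))$, Lemma \ref{lemma:isolatedPointsOfIrregularity} for the finitely many irregular parameters, the block decomposition \eqref{eq:disjointPartitionIndexSet} combined with \eqref{eq:singularValueDerivativesSumEquality}, \eqref{eq:singularValueDerivativesPartialSumInequality} and the Baker--Ericksen ordering of the partial derivatives, Lemma \ref{lemma:productWithOrderedVector}\,ii) blockwise, and finally Lemma \ref{lemma:convexityOneSidedDerivatives}. (The paper verifies $\partial^-p\leq\partial^+p$ at every $t\in(0,1)$, you only at the exceptional points; both suffice. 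Your explicit remark that the ordering of the $b_i$ at boundary points of $\Oset$ follows from the continuity argument of Section \ref{section:BEinequalities}, which only needs ellipticity at simple singular values, is in fact more careful than the paper's bare citation of Lemma \ref{lemma:partialDerivativeOrderingImpliedByBEinequalities}.)

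The one point of divergence is the step $p''(t)\geq0$ at parameters where $F+tH$ has \emph{persistently} repeated singular values. The paper's proof dismisses this as ``easy to verify'' without detail, so you cannot be measured against a written argument there; but your own sketch has a gap. Its pivotal claim --- that a persistent degeneracy forces $\xi$ to be orthogonal to the left \emph{and} $\eta$ to the right singular subspace of the degenerate value --- is asserted, not proved. What first-order perturbation theory gives is that the symmetric part of the block matrix $\bigl(u_a^{T}Hv_b\bigr)_{a,b}$ must be a multiple of the identity (the coinciding branches have equal first derivatives), and for $H=\xi\otimes\eta$ this only forces \emph{one} of the two orthogonalities; upgrading to ``both'', or otherwise showing that the second derivatives of the singular-value branches of $G_\eps+sH$ converge as the artificial splitting is removed (they contain reciprocal gaps), requires a genuine higher-order argument that you do not supply. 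Note that the step can be closed without any perturbation formulas, using only ingredients you already have: since your one-sided-derivative inequality holds at every degenerate parameter of every admissible segment, perturb the base point to a generic $F_\delta$ near $F$ so that $t\mapsto\det$ of the discriminant of the characteristic polynomial of $(F_\delta+tH)^{T}(F_\delta+tH)$ does not vanish identically; then $F_\delta+tH$ has repeated singular values only at finitely many $t$, Lemma \ref{lemma:convexityOneSidedDerivatives} gives convexity of $p_\delta(t)=W(F_\delta+tH)$, and letting $\delta\to0$ preserves convexity by continuity of $W$. With such a repair (or a completed perturbation argument) your proof is correct and coincides with the paper's in all essential respects.
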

\begin{proof}
	Again, in order to show that the mapping $t\mapsto W(F+tH)$ is convex on $[0,1]$ for any $F\in\GLpn$ and any $H\in\Rnn$ with $\rank(H)=1$ and $F+H\in\GLpn$ under the assumptions of the theorem, let $\lambdahat(t)=(\lambdahat_1(F+tH),\dotsc,\lambdahat_n(F+tH))$ and
	\[
		p\col[0,1]\to\R\,,\qquad p(t) = W(F+tH) = \ghat(\lambdahat(t))\,.
	\]
	Then due to Lemma \ref{lemma:isolatedPointsOfIrregularity}, the function $p$ is twice differentiable on $(0,1)\setminus\{t_1,\dotsc,t_m\}$ for finitely many $t_1,\dotsc,t_N\in(0,1)$, and due to the ellipticity of $W$ on the set of matrices with simple singular values and the assumed $C^2$-regularity of $\ghat$ on $\Oset$ (up to the boundary), it is easy to verify that	$p''(t)\geq0$ for all $(0,1)\setminus\{t_1,\dotsc,t_m\}$.
	
	According to Lemma \ref{lemma:convexityOneSidedDerivatives}, it remains to show that $\partial^- p(t) \leq \partial^+ p(t)$ for all $t\in(0,1)$.
	First, we find
	\[
		\partial^- p(t) = \sum_{i=1}^n \pdd{\,\ghat}{\lambdahat_i}(\lambdahat(t)) \cdot \partial^- \lambdahat_i(t)
		\qquad\text{and}\qquad
		\partial^+ p(t) = \sum_{i=1}^n \pdd{\,\ghat}{\lambdahat_i}(\lambdahat(t)) \cdot \partial^+ \lambdahat_i(t)\,.
	\]
	Now, for $\lambda\in\lambdahat(t)$, choose $\imin,\imax\in\{1,\dotsc,n\}$ as in Lemma \ref{lemma:singularValueDerivativePropertiesGeneral} and let $\iset=\{\imin,\dotsc,\imax\}$.
	Using the disjoint partition of $\{1,\dotsc,n\}$ given in \eqref{eq:disjointPartitionIndexSet}, we find
	\[
		\partial^- p(t) \;=\; \!\!\sum_{\lambda\in\lambdahat(t)}\; \sum_{i\in\iset\vphantom{\lambdahat}}\; \pdd{\,\ghat}{\lambdahat_i}(\lambdahat(t)) \cdot \partial^- \lambdahat_i(t)
		\quad\;\;\text{and}\quad\;\;
		\partial^+ p(t) \;=\; \!\!\sum_{\lambda\in\lambdahat(t)}\; \sum_{i\in\iset\vphantom{\lambdahat}}\; \pdd{\,\ghat}{\lambdahat_i}(\lambdahat(t)) \cdot \partial^+ \lambdahat_i(t)\,.
	\]
	According to Lemma \ref{lemma:singularValueDerivativePropertiesGeneral},
	\[
		\sum_{i\in\iset} \partial^+ \lambdahat_i(t) - \partial^- \lambdahat_i(t) = 0
		\qquad\text{and}\qquad
		\sum_{i=\imin}^k \partial^+ \lambdahat_i(t) - \partial^- \lambdahat_i(t)\geq0
	\]
	for all $t\in(0,1)$, $\lambda\in\lambdahat(t)$ and $k\in\{\imin,\dotsc,\imax\}$,
	and since
	\[
		\pdd{\,\ghat}{\lambdahat_i}(\lambdahat(t))
		\;\geq\; \pdd{\,\ghat}{\lambdahat_j}(\lambdahat(t))
		\qquad\tforall i,j\in \iset=\{\imin,\dotsc,\imax\}
		\quad\twith i\leq j
		\,,
	\]
	according to Lemma \ref{lemma:partialDerivativeOrderingImpliedByBEinequalities}, the vector $\Bigl(\pdd{\,\ghat}{\lambdahat_{\imin}}(\lambdahat(t)),\dotsc,\pdd{\,\ghat}{\lambdahat_{\imax}}(\lambdahat(t))\Bigr)$ is ordered for all $t\in(0,1)$ and each $\lambda\in\lambdahat(t)$ as well. Thus applying Lemma \ref{lemma:productWithOrderedVector} ii) to
	\[
		x_i=\partial^+ \lambdahat_{\imin+i-1}(t) - \partial^- \lambdahat_{\imin+i-1}(t)
		\,,\qquad
		b_i=\pdd{\,\ghat}{\lambdahat_{\imin+i-1}}(\lambdahat(t))
		\qquad\text{and}\qquad
		d=\imax+1-\imin
	\]
	yields
	\[
		\sum_{i\in\iset} \pdd{\,\ghat}{\lambdahat_i}(\lambdahat(t)) \cdot (\partial^+ \lambdahat_i(t) - \partial^- \lambdahat_i(t))
		= \sum_{i=\imin}^{\imax} \pdd{\,\ghat}{\lambdahat_i}(\lambdahat(t)) \cdot (\partial^+ \lambdahat_i(t) - \partial^- \lambdahat_i(t))
		= \sum_{i=1}^m x_i\.b_i
		\geq 0
	\]
	and thus
	\[
		\partial^+ p(t) - \partial^- p(t)
		\;= \sum_{\lambda\in\lambdahat(t)}\; \sum_{i\in\iset}\; \pdd{\,\ghat}{\lambdahat_i}(\lambdahat(t)) \cdot (\partial^+ \lambdahat_i(t) - \partial^- \lambdahat_i(t))
		\;\geq\; 0\,,
	\]
	which concludes the proof.
\end{proof}

\begin{remark}
	Although the regularity assumption $\ghat\in C^2(\Oset)$ is slightly more restrictive than the requirements of Theorem \ref{theorem:mainResultPlanar}, it is still considerably easier to satisfy than $C^2$-regularity on $\GLpn$, as both Examples  \ref{example:operatorNormEnergy} and \ref{example:conformallyInvariantEnergies} demonstrate.
\end{remark}
\subsubsection*{Acknowledgements}
The work of I.D.\ Ghiba has been supported by a grant of the Romanian Ministry of Research and Innovation, CNCS--UEFISCDI, project number PN-III-P1-1.1-TE-2019-0397, within PNCDI III.

\section{References}
\footnotesize
\printbibliography[heading=none]

\end{document}